\newtheorem{theorem}{Theorem}
\newtheorem{proposition}[theorem]{Proposition}
\newtheorem{corollary}[theorem]{Corollary}
\theoremstyle{definition}
\newtheorem*{example*}{Example}
\newtheorem*{remark*}{Remark}
\newcommand\dto{\overset{\mathrm{d}}{\to}}
\newcommand\pto{\overset{\mathrm{p}}{\to}}
\newcommand\rhox{\nu}
\begin{document}
\sloppy

\title{The external lengths in Kingman's coalescent}
 
\author{ Svante Janson%
\thanks{Department of Mathematics, Uppsala University, PO Box 480,
SE-751~06 Uppsala, Sweden.
\texttt{svante.janson@math.uu.se}}
 \qquad
 G\"otz Kersting%
\thanks{Fachbereich Informatik und Mathematik, Universit\"at Frankfurt, Fach 
  187, D-60054 Frankfurt am Main, Germany.
\texttt{kersting@math.uni-frankfurt.de}} }
\date{17 January, 2011}  
  \maketitle
\begin{abstract}
In this paper we prove asymptotic normality of the total length of external
branches in Kingman's coalescent. The proof uses an embedded Markov chain,
which can be described as follows: Take an urn with $n$ {\em black}
balls. Empty it in $n$ steps according to the rule: In each step remove a
randomly chosen pair of balls and replace it by one {\em red} ball. Finally
remove the last remaining ball. Then the numbers $U_k$, $0 \le k \le n$, of
red balls after $k$ steps exhibit an unexpected property: $(U_0,
\ldots,U_n)$ and $(U_n, \ldots,U_0)$ are equal in distribution. 
\end{abstract}
\begin{small}
\emph{MSC 2000 subject classifications.}    60K35, 60F05, 60J10  \\
\emph{Key words and phrases.}  coalescent, external branch, time reversibility, urn model
\end{small}

\section{Introduction and results}
Our main result in this paper is that the total length $L_n$ of all external branches  in Kingman's coalescent with $n$ external branches is asymptotically normal for $n \to \infty$. 

Kingman's coalescent (1982) consists of two components. First there are the coalescent times $T_1 >T_{2} > \cdots >T_n=0$. They are such that
\[ {k \choose 2}(T_{k-1}-T_{k})  \ , \quad k=2,\ldots,n \]
are independent, exponential random variables with expectation 1. Second
there are partitions $\pi_1=\big\{\{1,\ldots,n\}\big\}, \pi_2,\ldots ,
\pi_n=\big\{\{1\}, \ldots, \{n\}\big\}$ of the set $\{1,\ldots,n\}$, where
the set $\pi_k$ containes $k$ disjoint subsets  of $\{1,\ldots,n\}$ and
$\pi_{k-1}$ evolves from $\pi_k$  by merging two randomly chosen elements of
$\pi_k$. Moreover, $(T_n, \ldots, T_1)$ and $(\pi_n, \ldots, \pi_1)$ are
independent. For convenience we put $\pi_0:=\emptyset$.   

As is customary the coalescent can be represented by a tree with $n$ leaves labelled from 1 to $n$. Each of these leaves corresponds to an external branch of the tree. The other node of the branch with label $i$ is located at level
\[ \rho(i) := \max \{ k \ge 1 : \{i\} \not\in \pi_{k} \}  \]
within the coalescent.
The length of this branch  is  $T_{\rho(i)}$, 
The total external length of the coalescent is given by
\[ L_n := \sum_{i=1}^n T_{\rho(i)} \ . \]

This quantity is of a certain statistical interest. Coalescent trees have been introduced by Kingman as a model for the genealogic relationship of $n$ individuals, down to their most recent common ancestor. Mutations can be located everywhere on the branches. Then mutations on external branches affect only single individuals. This fact was used by Fu and Li (1993) in designing their $D$-statistic and providing a test whether or not data fit to Kingman's coalescent. 

Otherwise single external branches have mainly been studied in the
literature. The asymptotic distribution of $T_{\rho(i)}$ has been obtained
by Caliebe et al (2007), using a representation of its Laplace transform due
to Blum and Fran\c{c}ois (2005). We address this issue in Section \ref{S6}
below. Freund and M\"ohle (2009) investigated the external branch length of
the Bolthausen-Snitman coalescent, and Gnedin et al (2008) the
$\Lambda$-coalescent. 

Here is our main result.
\begin{theorem}\label{T1}
As $n \to \infty$,
\[\frac 12 \sqrt {\frac n{\log n}}\bigl(L_n - 2\bigr) \ \stackrel{d}{\to} \ N(0,1) \ .\]  
\end{theorem}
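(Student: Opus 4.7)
My plan is to condition on the partition process and separate the randomness coming from the exponential coalescent times from that coming from the combinatorics. Writing $X_k$ for the number of singleton blocks of $\pi_k$, the conditional expectation is
\[
M_n := E[L_n \mid \pi_2,\ldots,\pi_n] = \sum_{k=2}^n \frac{2X_k}{k(k-1)},
\]
and the residual $E_n := L_n - M_n$ is, conditionally on the partitions, a sum of independent centered exponentials, hence uncorrelated with the partition-measurable $M_n$; a straightforward second-moment bound using $E X_k^2 \le C(k^4/n^2 + k^2/n)$ gives $\mathrm{Var}(E_n) = O(1/n)$. Consequently $\tfrac12\sqrt{n/\log n}\,E_n \to 0$ in $L^2$, so it suffices to prove the CLT for $M_n - 2$.

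A short computation with the merger probabilities yields $E[X_{k-1}\mid X_n,\ldots,X_k] = X_k(k-2)/k$, and consequently $Z_k := X_k/(k(k-1))$ is a backward martingale (equivalently $W_m := Z_{n-m}$ is a forward martingale on $0 \le m \le n-2$). Summation by parts then produces
\[
M_n - 2 \;=\; 2\sum_{j=1}^{n-2}(n-1-j)\,\delta_j, \qquad \delta_j := W_j - W_{j-1},
\]
a weighted sum of martingale differences. Using the transition probabilities one finds $E[\delta_j^2 \mid \mathcal G_{j-1}] = 2X_k(k-X_k)/[k^2(k-1)^3(k-2)]$ with $k = n-j+1$, which combined with the closed form $E[X_k(k-X_k)] = k(k-1)^2(n-k)/[(n-1)(n-2)]$ puts $\mathrm{Var}(M_n-2)$ at the correct order $\log n/n$.

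Asymptotic normality of $M_n - 2$ then follows from a martingale CLT applied to $\sum_j (n-1-j)\delta_j$. The Lindeberg condition holds: each $|\delta_j|$ is deterministically $O(k^{-2})$, so the scaled maximum increment vanishes for $k$ not too small, while the total variance contribution from the very-small-$k$ range is $o(\log n/n)$ and can be harmlessly truncated. The principal obstacle is the convergence in probability of the conditional sum of squares $\sum_j (n-1-j)^2 E[\delta_j^2\mid\mathcal G_{j-1}]$ to its expected value, which reduces to concentration of the random quadratic functional $\sum_k (k-2)X_k(k-X_k)/[k^2(k-1)^3]$. Here I plan to invoke the urn-chain reversibility $(U_0,\ldots,U_n)\stackrel{d}{=}(U_n,\ldots,U_0)$ flagged in the abstract: via the identification $X_m = m - U_{n-m}$ the functional becomes a quadratic expression in the reversible chain, and the distributional symmetry $U_s \stackrel{d}{=} U_{n-s}$ transfers variance estimates between the two ends of the trajectory, controlling the fluctuations uniformly across the entire range where $X_k$ evolves from nearly $k$ down to nearly $0$. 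Once this concentration is in place, the martingale CLT yields the claimed asymptotic normality.
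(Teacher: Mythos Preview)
Your martingale setup is correct, but the argument breaks at the step you treat as routine. You claim ``the total variance contribution from the very-small-$k$ range is $o(\log n/n)$ and can be harmlessly truncated''. That is false: your own formulas give
\[
8\sum_{k=3}^{\lfloor\sqrt n\rfloor}\frac{(k-2)\,\mathbf E[X_k(k-X_k)]}{k^2(k-1)^3}
= 8\sum_{k=3}^{\lfloor\sqrt n\rfloor}\frac{(k-2)(n-k)}{k(k-1)(n-1)(n-2)}
\sim \frac{4\log n}{n},
\]
exactly \emph{half} of $\mathrm{Var}(M_n-2)\sim 8\log n/n$. This is precisely the peculiarity stressed after Theorem~\ref{T1}: the normalisation uses only half the variance. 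In your language, the conditional quadratic variation does \emph{not} converge in probability to its expectation $8\log n/n$; it converges to $4\log n/n$, because the small-$k$ terms---which carry half the expected value---vanish identically with high probability (indeed $X_k=0$ for all $k\le n^{1/2-\varepsilon}$ w.h.p.). Consequently the small-$k$ piece of $M_n-2$ is $O_P(n^{-1/2})=o_P\bigl(\sqrt{\log n/n}\,\bigr)$ even though its variance is $\sim 4\log n/n$: it is heavy-tailed (cf.\ Theorem~\ref{T4}), and no second-moment bound can truncate it.

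The paper takes a different route. Rather than a martingale CLT, it uses reversibility only to read off the \emph{forward} transition probabilities of $(V_k)$, then couples the increments $1+V_k-V_{k+1}$ (your $X_{k+1}-X_k$) with independent Bernoulli variables $Y_k$, obtaining
$L_n^{\alpha,\beta}-\mathbf E(L_n^{\alpha,\beta})=2\sum_k(Y_k-\mathbf EY_k)/k+O_P(n^{-1/2})$
and hence Lyapunov's CLT for independent summands on the range $k\ge\sqrt n$; the range $k<\sqrt n$ is handled separately via the Poisson limit of Theorem~\ref{T2}. Your proposed use of reversibility as a variance-transfer device is aimed at the wrong target: what is actually needed is (i) concentration of the $k>\sqrt n$ portion of the quadratic variation at $4\log n/n$ (not $8\log n/n$), and (ii) a direct in-probability bound $O_P(n^{-1/2})$ on the $k\le\sqrt n$ portion of $M_n-2$ itself, not a variance estimate.
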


The proof will show that the limiting normal distribution originates from the random partitions and not from the exponential waiting times. 

A second glance on this result  reveals a peculiarity: The normalization of $L_n$ is carried out using its expectation, but only half of its variance. These two terms have been  determined by Fu and Li (1993) (with a correction given by Durrett (2002)).  They obtained 
\[ \mathbf E(L_n) = 2 \ , \quad \mathbf{Var}(L_n) = \frac{8n h_n -16n +8}{(n-1)(n-2)} \sim \frac{8 \log n}{n}  \]
with
$ h_n := 1+ \frac 12 + \cdots + \frac 1n $, the $n$-th harmonic number. Below we derive a more general result.

To uncover this peculiarity we shall study the external lengths in more
detail. First we look at the point processes $\eta_n$ on $(0,\infty)$, given
by $\eta_n=\sum_{i=1}^n\delta_{\sqrt n T_{\rho(i)}}$, 
i.e.
\begin{equation}\label{etan}
 \eta_n(B):= \# \{ i : \sqrt n T_{\rho(i)} \in B \}   
\end{equation}
for Borel sets $B \subseteq (0,\infty)$.

\begin{theorem}\label{T2}
As $n\to \infty$ the point process $\eta_n$ converges in distribution, 
as point processes on $(0,\infty]$, to a Poisson
point process $\eta$ on $(0,\infty)$ with intensity measure  
$\lambda(dx) = 8x^{-3} \, dx$.
\end{theorem}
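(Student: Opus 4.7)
The plan is to prove $\eta_n\dto\eta$ by combining a concentration argument for the coalescence times with a factorial-moment computation for the singleton counts, exploiting the stated independence of the partition sequence $(\pi_k)$ and the times $(T_k)$.

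\emph{Step 1: Concentration of the times.} From $\mathbf{E}[T_k]=2/k-2/n$ and $\mathrm{Var}(T_k)=4\sum_{j>k}(j(j-1))^{-2}=O(k^{-3})$ one obtains $\sqrt n\,T_{\lfloor c\sqrt n\rfloor}\to 2/c$ in probability for every fixed $c>0$, with fluctuations of order $n^{-1/4}$. Setting $K_a:=\lfloor 2\sqrt n/a\rfloor$ and $X_k:=\#\{i:\{i\}\in\pi_k\}$, the event $\{\sqrt n\,T_{\rho(i)}>a\}$ differs from $\{\rho(i)\le K_a\}=\{\{i\}\in\pi_{K_a+1}\}$ only for indices $i$ with $\rho(i)$ in a window of width $O(n^{1/4})$ around $K_a$, and the expected number of such indices is $O(n^{-1/4})$ since a typical coalescence at that scale merges only $O(n^{-1/2})$ singletons on average. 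Hence $\eta_n((a,\infty])=X_{K_a+1}+o_p(1)$, and for $0<a_1<\cdots<a_{m+1}$ the counts $\eta_n((a_l,a_{l+1}])$ are, up to $o_p(1)$, the differences $Y_l:=X_{K_{a_l}+1}-X_{K_{a_{l+1}}+1}$.

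\emph{Step 2: Joint factorial moments of singleton counts.} At each merger $\pi_l\to\pi_{l-1}$ the chance of not involving any of $j$ prescribed singletons is $\binom{l-j}{2}/\binom{l}{2}$, so by exchangeability
\[
\mathbf{P}\bigl(\{1\},\ldots,\{j\}\in\pi_k\bigr)=\prod_{l=k+1}^n\frac{(l-j)(l-j-1)}{l(l-1)}=\frac{k!(k-1)!(n-j)!(n-j-1)!}{(k-j)!(k-j-1)!\,n!(n-1)!},
\]
and the $j$th factorial moment $\mathbf{E}[X_k(X_k-1)\cdots(X_k-j+1)]=n(n-1)\cdots(n-j+1)\cdot\mathbf{P}(\{1\},\ldots,\{j\}\in\pi_k)$ is asymptotic to $(k^2/n)^j$ for fixed $j$ and $k\to\infty$ with $k=o(n)$. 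In particular, at $k=K_a$ this gives $(4/a^2)^j=\lambda((a,\infty])^j$, identifying the one-dimensional limit as $\mathrm{Poisson}(4/a^2)$. Iterating the same calculation through successive level-ranges $[K_{a_{l+1}}+1,K_{a_l}]$ and removing the ``not at a lower level'' constraint by inclusion--exclusion on the groups yields, for any disjoint label-groups $G_l$ of sizes $j_l$,
\[
\mathbf{E}\Bigl[\prod_l Y_l(Y_l-1)\cdots(Y_l-j_l+1)\Bigr]\longrightarrow\prod_l\lambda\bigl((a_l,a_{l+1}]\bigr)^{j_l},
\]
so $(Y_1,\ldots,Y_m)$ converges in distribution to a vector of independent Poisson random variables with the required means.

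\emph{Step 3: Conclusion and main obstacle.} Joint Poisson convergence of the counts on disjoint intervals together with the mean convergence implies $\eta_n\dto\eta$ on $(0,\infty]$ by Kallenberg's characterization of Poisson-process limits for simple point processes. The technical heart of the proof is the joint factorial-moment calculation of Step 2: one must verify that the probability of several disjoint groups of labels each merging within prescribed, disjoint ranges of levels factorizes across the groups in the limit. This factorization---which is precisely the asymptotic independence of the limiting Poisson counts---reflects the fact that at the relevant scale $k\sim\sqrt n$ only $O(1)$ of the $n$ labels are still singletons, so the different ``layers'' of surviving singletons interact almost exclusively with the $\Theta(n)$ non-singleton blocks rather than with each other.
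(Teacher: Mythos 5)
Your proposal is correct in substance but takes a genuinely different route from the paper's proof. The paper works with the increments $X_k=\#\{i:\rho(i)=k\}$ (note that your $X_k$ denotes the singleton count of $\pi_k$, a different quantity) and constructs an explicit coupling of the chain $(X_k)$ with genuinely independent Bernoulli variables $Y_k$, $\mathbf P(Y_k=1)=2k/(n-k)$, which agrees with $(X_k)$ on the whole relevant range $k\le 2\sqrt n/a$ with probability tending to one; the Poisson limit is then immediate from the classical Poisson approximation for sums of independent rare indicators, and the times are disposed of by a single maximal inequality, $\max_{k\ge n^{2/5}}\sqrt n\,|T_k-\mathbf E(T_k)|\pto0$ via Doob. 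You instead apply the method of joint factorial moments directly to the singleton counts of the partition chain, using the exact product formula for survival probabilities, and handle the times by a windowing argument around $K_a$; both are sound, and your moment computation is arguably more elementary and self-contained. What the paper's coupling buys is reusability: the same independent $Y_k$ power the central limit theorem in Theorem \ref{T4} via Lyapunov's criterion, which your Poisson-regime moment calculation would not yield. Two smaller points. First, the multi-group factorization at the end of your Step 2 --- which you rightly identify as the crux --- is asserted rather than carried out; it does hold, since each segment of levels contributes a factor $\prod_l \binom{l-c}{2}/\binom{l}{2}$ with $c$ the number of labels still constrained to be singletons, and these telescoping products factorize to leading order with relative errors $O(k^{-1})=O(n^{-1/2})$, but a complete write-up must display the nested inclusion--exclusion over subsets of each group. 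Second, you do not need (and should not invoke) a criterion for \emph{simple} point processes: $\eta_n$ is not simple, as $X_k$ in the paper's sense may equal $2$; however, joint convergence of the counts on the dissecting semiring of intervals $(a,b]$ with $\lambda(\{a\})=\lambda(\{b\})=0$ already implies $\eta_n\dto\eta$ by Kallenberg (2002), Theorem 16.16, which is the result the paper itself cites.
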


We use $(0,\infty]$ in the statement of Theorem \ref{T2}
instead of $(0,\infty)$ since it is
  stronger, including for example $\eta_n(a,\infty)\dto \eta(a,\infty)$ for
  every $a>0$. The significance 
 is that, as $n\to\infty$, there will be points clustering at 0 but not at $\infty$.
(Below in the proof we recall the definition of convergence in distribution of point processes.)

Theorem \ref{T2} permits a first orientation. Since $\sqrt n L_n = \int x \,
\eta_n(dx)$, one is tempted to resort to infinitely divisible
distributions. However, the intensity measure $\lambda(dx)$ is slightly
outside the range of the L\'evy-Chintchin formula. Shortly speaking this
means that small points of $\eta_n$ have a dominant influence on the
distribution of $L_n$ and we are within the domain of the normal
distribution.

Thus  let  us look in more detail on the external lengths and focus on
\[ L_{n}^{\alpha,\beta} :=  \sum_{n^\alpha \le \rho(i) < n^\beta} T_{\rho(i)}\ , \quad 0 \le \alpha < \beta \le 1 \ , \]
which is the total length of those external branches having their internal nodes between level $\lceil n^\alpha\rceil$ and $\lceil n^\beta\rceil$ within the coalescent. Obviously $L_n=L_n^{0,1}$. 

\begin{proposition}\label{P3}
For $0 \le \alpha < \beta \le 1$
\[ \mathbf E(L_n^{\alpha,\beta}) = \frac{2}{n(n-1)}\bigl(\lceil n^\beta \rceil - \lceil n^\alpha \rceil \bigr)\bigl(2n+1-\lceil n^\beta \rceil - \lceil n^\alpha \rceil\bigr)
\]
and
\[\mathbf {Var}(L_n^{\alpha,\beta}) \sim 8(\beta-\alpha) \frac{\log n}{n} \ ,\]
as $n \to \infty$.  
\end{proposition}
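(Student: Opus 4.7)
The natural starting point is the representation
\[
L_n^{\alpha,\beta}=\sum_{k=a}^{b-1}Y_{k+1}T_k,\qquad a:=\lceil n^\alpha\rceil,\ b:=\lceil n^\beta\rceil,
\]
where $S_l:=\#\{i:\{i\}\in\pi_l\}$ and $Y_{k+1}:=S_{k+1}-S_k\in\{0,1,2\}$ is the number of singletons merged at the transition $\pi_{k+1}\to\pi_k$. The sequences $(Y_l)$ and $(T_l)$ are independent. Since each block is picked with probability $2/(l+1)$ in the merger, $\mathbf{E}[S_l\mid S_{l+1}]=\tfrac{l-1}{l+1}S_{l+1}$, and iterating gives $\mathbf{E}(S_l)=l(l-1)/(n-1)$, hence $\mathbf{E}(Y_{k+1})=2k/(n-1)$. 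Combined with $\mathbf{E}(T_k)=2(1/k-1/n)$ this yields
\[
\mathbf{E}(L_n^{\alpha,\beta})=\sum_{k=a}^{b-1}\mathbf{E}(Y_{k+1})\,\mathbf{E}(T_k)=\frac{4}{n(n-1)}\sum_{k=a}^{b-1}(n-k),
\]
which rearranges to the stated closed form.

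For the variance I would condition on the partition chain, writing $\mathbf{Var}(L_n^{\alpha,\beta})=V_Y+V_T$ with $V_Y:=\mathbf{Var}(\mathbf{E}[L_n^{\alpha,\beta}\mid\pi])$ and $V_T:=\mathbf{E}[\mathbf{Var}(L_n^{\alpha,\beta}\mid\pi)]$. Given $\pi$, $L_n^{\alpha,\beta}$ is a linear combination of the independent unit exponentials $\xi_j:=\binom{j}{2}(T_{j-1}-T_j)$; regrouping by $\xi_j$ gives
\[
V_T=\sum_{j=a+1}^{b}\binom{j}{2}^{-2}\mathbf{E}[(S_j-S_a)^2]+\mathbf{E}[(S_b-S_a)^2]\,\mathbf{Var}(T_b).
\]
Using the exact formula $\mathbf{E}(S_l^2)=l(l-1)\bigl(n-2+(l-1)(l-2)\bigr)/((n-1)(n-2))$---derived from the joint probability $P(\{1\},\{2\}\in\pi_l)$ by the same pair-picking argument and exchangeability---one has $\mathbf{E}(S_l^2)=O(l^2/n+l^4/n^2)$; combined with $\binom{j}{2}^{-2}=O(j^{-4})$ and $\mathbf{Var}(T_b)=O(b^{-3})$, a direct bound yields $V_T=O(1/n)=o(\log n/n)$.

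The main term sits in $V_Y$. Summation by parts via the identity $m_{l-1}-m_l=2/(l(l-1))$ (where $m_l:=\mathbf{E}(T_l)$) converts
\[
\mathbf{E}[L_n^{\alpha,\beta}\mid\pi]=m_{b-1}S_b-m_aS_a+2\sum_{l=a+1}^{b-1}\frac{S_l}{l(l-1)},
\]
and the two boundary terms have variance $O(1/n)$, while their covariances with the sum are $o(\log n/n)$ by Cauchy--Schwarz. The variance of the sum is computed from
\[
\mathbf{Cov}(S_l,S_{l'})=\frac{l(l-1)(n-l')(n-l'-1)}{(n-1)^2(n-2)}\qquad(l\le l'),
\]
which follows from the reverse-time martingale identity $\mathbf{E}[S_l\mid\pi_{l'}]=S_{l'}\,l(l-1)/(l'(l'-1))$ obtained by iterating the one-step recursion for individual singletons. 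Substituting and symmetrising, the dominant contribution reduces to
\[
\frac{8}{(n-1)^2(n-2)}\sum_{l'=a+1}^{b-1}(l'-a)\frac{(n-l')(n-l'-1)}{l'(l'-1)}\sim\frac{8(\beta-\alpha)\log n}{n},
\]
via $(n-l')(n-l'-1)\sim n^2$ in the bulk and the elementary asymptotic $\sum_{l'=a+1}^{b-1}(l'-a)/(l'(l'-1))\sim\log(b/a)\sim(\beta-\alpha)\log n$. The main obstacle is this last step: one must check that the range $l'\sim n$ (relevant only when $\beta=1$) contributes only $O(1/n)$, and that the diagonal and boundary contributions are genuinely $o(\log n/n)$.
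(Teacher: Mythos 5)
Your proposal is correct, and at its core it is the same argument as the paper's: the representation $L_n^{\alpha,\beta}=\sum_{a\le k<b}T_kX_k$ with $X_k=Y_{k+1}=S_{k+1}-S_k$, the first two moments of the singleton-count chain (your survival-probability computation of $\mathbf E(S_l)$, $\mathbf E(S_l^2)$ and $\mathbf{Cov}(S_l,S_{l'})$ is exactly the paper's Proposition \ref{P6} for $V_k=k-S_k$, proved there by the same individual-ball indicator argument), and the law of total variance given the partition chain (the paper writes this as $\mathbf{Var}\bigl(\sum(T_k-\mathbf E T_k)X_k\bigr)+\mathbf{Var}\bigl(\sum\mathbf E(T_k)X_k\bigr)$, which is your $V_T+V_Y$). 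The genuine difference is bookkeeping: for $V_Y$ the paper stays with the increments $X_k$ and extracts the leading term $8(\beta-\alpha)\log n/n$ from the diagonal $\sum\mathbf E(T_k)^2\mathbf{Var}(X_k)$, the cross-covariances $\mathbf{Cov}(X_k,X_l)=-4k(n-l-1)/((n-1)^2(n-2))$ summing to $O(1/n)$; you Abel-sum to the partial sums $S_l$, after which the leading term instead lives in the off-diagonal $\mathbf{Cov}(S_l,S_{l'})$ and the diagonal is $O(1/n)$. Both work; the paper's version is slightly cleaner at the endpoint $\beta=1$ because the correction $\mathbf{Var}(X_k)=\tfrac{2k}{n}(1+O(k/n))$ produces a summable error $O(n^{-2})$ per term, whereas in your form one must split the $l'$-sum and check that the range $l'\ge\varepsilon n$ contributes $O(n^2)$ to $\sum(l'-a)(n-l')(n-l'-1)/(l'(l'-1))$, hence $O(1/n)$ after dividing by $(n-1)^2(n-2)$ --- the loose end you flag does close, since that range contributes at most $\varepsilon^{-1}\sum_{j\le n}j^2/n^3=O(\varepsilon^{-1}n^2)$ while the bulk $l'\le\varepsilon n$ gives $(1+O(\varepsilon))n^2(\beta-\alpha)\log n$. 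Your regrouping of $V_T$ by the unit exponentials $\xi_j$ is also a tidy alternative to the paper's covariance bound $\mathbf{Cov}(T_k,T_l)=\mathbf{Var}(T_{k\vee l})$, and yields the same $O(1/n)$.
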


In particular $\mathbf E(L_n^{1-\varepsilon,1})\sim \mathbf E(L_n^{0,1})$, whereas $\mathbf {Var}(L_n^{1-\varepsilon,1})\sim \varepsilon\mathbf {Var}(L_n^{0,1})$. Thus the proposition indicates that the systematic part of $L_n$ and its fluctuations  arise in different regions of the coalescent tree, the former close to the leaves and the latter closer to the root. 

Still this proposition gives an inadequate impression.

\begin{theorem}\label{T4}
For $0 \le \alpha < \beta < 1/2$
\[ \mathbf P(L_n^{\alpha,\beta} =0) \ \to\ 1 \]
as $n \to \infty$. 
Moreover 
\[ \sqrt{n} L_{n}^{0,\frac 12}\ \stackrel d{\to}\ \int_2^\infty x \, \eta(dx)  \]
and for $1/2 \le \alpha < \beta \le 1$
\[ \frac{L_n^{\alpha,\beta} - \mathbf E(L_n^{\alpha,\beta})}{\sqrt{\mathbf {Var}(L_n^{\alpha,\beta})}} \ \stackrel d{\to}\ N(0,1) \ .\]
In addition $L_{n}^{\alpha,\beta}$ and $L_{n}^{\gamma,\delta}$ are asymptotically independent for $\alpha < \beta \le\gamma< \delta$.  
\end{theorem}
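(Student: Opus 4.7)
My approach treats the four parts separately, throughout exploiting (a) the independence of the coalescent times $(T_k)$ and the partition chain $(\pi_k)$, (b) the identity $\mathbf E[S_k]=k(k-1)/(n-1)$ for the expected number $S_k$ of singleton blocks in $\pi_k$ (a direct consequence of $\mathbf P(\{i\}\in\pi_k)=\prod_{j=k+1}^n(j-2)/j$), and (c) Theorem~\ref{T2}. For the first claim I note that the number of branches contributing to $L_n^{\alpha,\beta}$ equals $N^{\alpha,\beta}:=S_{\lceil n^\beta\rceil}-S_{\lceil n^\alpha\rceil}$, so $\{L_n^{\alpha,\beta}=0\}=\{N^{\alpha,\beta}=0\}$ a.s.; Markov's inequality together with $\mathbf E[N^{\alpha,\beta}]=O(n^{2\beta-1})$ then gives the result for $\beta<1/2$. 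For the second claim, one argues that $T_{\lceil n^{1/2}\rceil}$ concentrates at $2/\sqrt n$ with fluctuations of order $n^{-3/4}$, so the level-cutoff $\rho(i)<n^{1/2}$ coincides (on an event of probability tending to $1$) with the length-cutoff $\sqrt n\,T_{\rho(i)}>2$; a continuous-mapping argument, using that $\lambda$ has no atom at $2$ and that $\eta((2,\infty))<\infty$ a.s., applied to Theorem~\ref{T2}, then delivers $\sqrt n L_n^{0,1/2}\dto\int_2^\infty x\,\eta(dx)$.

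For the CLT with $1/2\le\alpha<\beta\le 1$, write $L_n^{\alpha,\beta}=\sum_{k=a}^{b-1}T_k N_k$ with $N_k:=S_{k+1}-S_k\in\{0,1,2\}$, $a=\lceil n^\alpha\rceil$, $b=\lceil n^\beta\rceil$. Decomposing $T_k=\mathbf E[T_k]+(T_k-\mathbf E[T_k])$ and using the independence of $(T_k)$ and $(N_k)$ shows that the ``time noise'' $\sum_k N_k(T_k-\mathbf E[T_k])$ has variance $O(1/n)$ (from $\mathbf{Var}(T_k)=O(k^{-3})$ and $\mathbf E[N_k]\asymp k/n$), which is negligible compared with the target $\log n/n$. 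It thus suffices to prove a CLT for $\sum_{k=a}^{b-1}(2/k-2/n)(N_k-\mathbf E[N_k])$. I would set $\mathcal F_k=\sigma(\pi_n,\ldots,\pi_k)$ and apply a martingale CLT to the difference array $(2/k-2/n)(N_k-\mathbf E[N_k\mid\mathcal F_{k+1}])$: given $\mathcal F_{k+1}$ the next merge picks two of the $k+1$ blocks uniformly, so $N_k$ is a simple function of $S_{k+1}$ with $|N_k|\le 2$, and an elementary calculation gives $\mathbf{Var}(N_k\mid\mathcal F_{k+1})=2q_k(1-q_k)(k-1)/k$ with $q_k:=S_{k+1}/(k+1)$. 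Using the $L^2$ concentration $S_{k+1}/(k+1)\to k/n$, the conditional variances summed against $(2/k)^2$ produce the limit $8(\beta-\alpha)\log n/n$, matching Proposition~\ref{P3}, while the uniform bound $|N_k|\le 2$ makes the Lindeberg condition automatic.

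Asymptotic independence follows from the forward Markov property of $(\pi_k)$: conditional on $\pi_{\lceil n^\beta\rceil}$, the merges generating $L_n^{\alpha,\beta}$ (strictly later in coalescent time) are independent of everything happening at higher levels, and in particular of $L_n^{\gamma,\delta}$. Since by exchangeability the conditional law of $L_n^{\alpha,\beta}$ depends on $\pi_{\lceil n^\beta\rceil}$ only through $S_{\lceil n^\beta\rceil}$, and the latter concentrates at its mean, the conditional CLT (or, for $\beta\le 1/2$, the Poisson limit from Theorem~\ref{T2}) goes through uniformly over the relevant values and yields unconditional joint convergence with independent limit marginals.

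The main difficulty lies in step (3): making the martingale-CLT computation produce the limiting variance with the sharp constant $8$ of Proposition~\ref{P3} requires an $L^2$ estimate for $S_k-k(k-1)/(n-1)$ (expected to be of order $k/\sqrt n$ for $k\ge n^{1/2}$), and one must dispose carefully of the boundary contributions at $k\sim n^\alpha$ and $k\sim n^\beta$. The reversibility of the embedded urn chain $(U_k)$ advertised in the introduction, via the identity $N_k=1+(U_{n-k}-U_{n-k-1})$, may be useful either to render the variance computation symmetric or to transfer estimates cleanly between the two halves of the coalescent.
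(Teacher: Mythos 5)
Your first two claims are handled essentially as in the paper (Markov's inequality applied to $\mathbf E[\lceil n^\beta\rceil-V_{\lceil n^\beta\rceil}]=O(n^{2\beta-1})$, and the sandwich between the length-cutoffs $2\pm\varepsilon$ combined with Theorem~\ref{T2}), and your reduction of the CLT to $\sum_k c_k(N_k-\mathbf E[N_k])$ with $c_k=2/k-2/n$ is also the paper's first step. The genuine gap is in the martingale step. The array you feed into the martingale CLT is $c_k\bigl(N_k-\mathbf E[N_k\mid\mathcal F_{k+1}]\bigr)$, but the quantity you have reduced to is $\sum_k c_k\bigl(N_k-\mathbf E[N_k]\bigr)$; the difference is the predictable part
\[ D:=\sum_k c_k\bigl(\mathbf E[N_k\mid\mathcal F_{k+1}]-\mathbf E[N_k]\bigr)=\sum_k c_k\,\tfrac{2}{k+1}\bigl(S_{k+1}-\mathbf E[S_{k+1}]\bigr) ,\]
and this is \emph{not} negligible. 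Indeed, $\mathbf{Cov}(S_j,S_m)=\frac{j(j-1)(n-m)(n-m-1)}{(n-1)^2(n-2)}$ for $j\le m$ (Proposition~\ref{P6} via $S_j=j-V_j=j-U_{n-j}$), so the off-diagonal terms give
\[ \mathbf{Var}(D)\approx 32\sum_{j<m}\frac{1}{j^2m^2}\cdot\frac{j^2(n-m)^2}{n^3}\approx\frac{32}{n}\sum_{m}\frac1m\sim 32(\beta-\alpha)\frac{\log n}{n}, \]
four times the target variance. Consistently, your martingale part $M$ has $\mathbf{Var}(M)\sim 8(\beta-\alpha)\log n/n$ and $\mathbf{Var}(M+D)\sim 8(\beta-\alpha)\log n/n$ by Proposition~\ref{P3}, which forces $\mathrm{Corr}(M,D)\to-1$ and $D\approx-2M$. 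So your CLT establishes normality of $M$, not of $M+D$; to rescue the argument you would additionally have to prove $D+2M=o_P\bigl(\sqrt{\log n/n}\bigr)$, a nontrivial step you have not identified. Your own estimate $\mathrm{sd}(S_k)\asymp k/\sqrt n$ should have flagged this: the terms $\tfrac{4}{k^2}(S_{k+1}-\mathbf E S_{k+1})$ have standard deviation $\asymp 1/(k\sqrt n)$ and are strongly positively correlated across the $\log n$ scales, so they do not average out.

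The paper circumvents exactly this difficulty with a different device: it enlarges the probability space so that $X_k=N_k$ is coupled to genuinely independent Bernoulli variables $Y_k$ with $\mathbf P(Y_k=1)=2k/(n-k)$. The crucial point is that the conditional discrepancy is $\mathbf E(Y_k-X_k\mid V_k)=2(k-V_k)/(n-k)$, with denominator $n-k$ rather than $k$; the resulting correction $\sum_k\mathbf E(T_k)\,2V_k/(n-k)$ then has variance $O(n^{2\beta-3})=o(\log n/n)$ for $\beta<1$. This yields $L_n^{\alpha,\beta}-\mathbf E L_n^{\alpha,\beta}=2\sum_k(Y_k-\mathbf E Y_k)/k+O_P(n^{-1/2})$, from which Lyapunov's CLT and the asymptotic independence (disjoint ranges of independent $Y_k$) follow at once. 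Two further loose ends in your write-up: the case $\beta=1$ lies outside the range where your approximations hold uniformly (the paper splits off $L_n^{1-\varepsilon,1}$ and disposes of it by Proposition~\ref{P3}), and your asymptotic-independence argument via a conditional CLT given $S_{\lceil n^\beta\rceil}$ would require a uniformity over the conditioning value that you have not supplied.
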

This result implies Theorem \ref{T1}: In $L_n=L_{n}^{0,\frac 12}+ L_{n}^{\frac 12,1}$ the summands are of order $\sqrt {1/n}$ and $\sqrt{\log n/n}$, such that in the limit the second, asymptotically normal component  dominates. To this end, however, $n$ has to become exponentially large, otherwise the few long branches, which make up $ L_{n}^{0,\frac 12}$, cannot be neglected and may produce extraordinary large values of $L_n$. Thus the normal approximation for the distribution of $L_n$ seems little useful for practical purposes. One expects a fat right tail compared to the normal distribution. Indeed $\int_2^\infty x \, \eta(dx)$ has finite mean but infinite variance. 

This is illustrated by the following two histograms from 10000 values of $L_n$, where the length of the horizontal axis to the right indicates the range of the values.
\begin{center}
\psfrag{n1}{$n=50$}
\psfrag{n2}{$n=1000$}
\includegraphics[height=3cm,width=11cm]{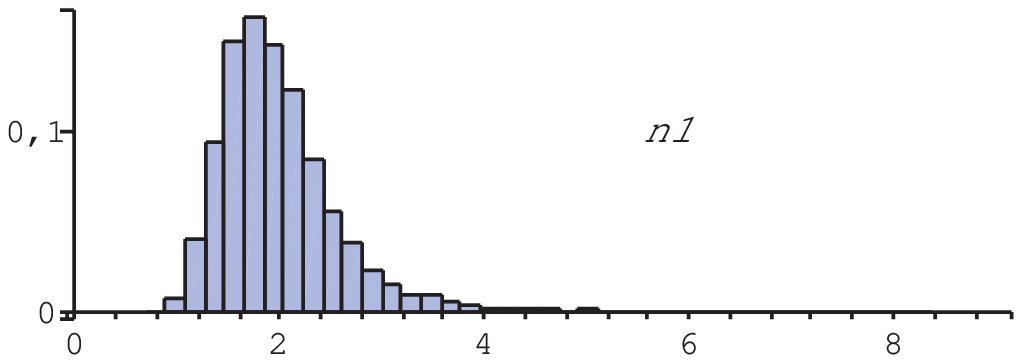}
\includegraphics[height=3cm,width=11cm]{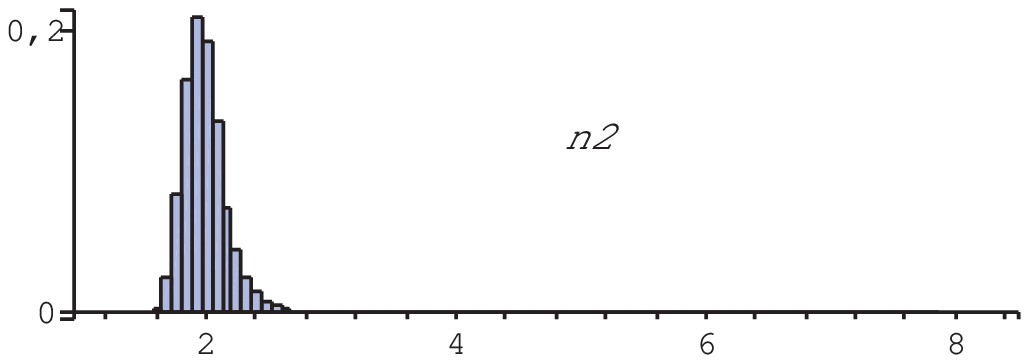}
\end{center}
The heavy tails to the right are clearly visible. Also very large outliers appear: For $n=50$ the simulated values of $L_n$ range from 0.685 to 8.38, and for $n=1000$ from 1.57 to 7.87.

Also it turns out that the approximation of the variance in Proposition
\ref{P3} is good only for very large $n$. This can be seen already from the
formula of Fu and Li.  To get an exact formula for the variance we look at a
somewhat different quantity, namely 
\[ \hat L_n^{\alpha,\beta} :=  \sum_{i=1}^n (T_{\rho(i)}\wedge T_{\lfloor n^\alpha\rfloor}-T_{\rho(i)}\wedge T_{\lfloor n^\beta\rfloor}) \]
with $0 \le \alpha < \beta \le 1$, which is the portion of the external
length between level $\lfloor n^\alpha\rfloor$ and $\lfloor n^\beta\rfloor$ 
within the coalescent. 

\begin{proposition}\label{P5}
For $0 \le \alpha \le 1$ with $m:=\lfloor n^\alpha\rfloor$
\[ \mathbf {E} (\hat L_n^{\alpha,1}) = 2 \frac{n-m}{n-1} \]
and
\[\mathbf {Var} (\hat L_n^{\alpha,1}) = \frac{8(h_{n-1}-h_{m-1})(n+2m-2)}{(n-1)(n-2)}-\frac{4(n-m)(4n+m-5)}{(n-1)^2(n-2)} \ . \]  
\end{proposition}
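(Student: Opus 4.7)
The plan is to first express $\hat L_n^{\alpha,1}$ as a weighted sum of the coalescent waiting times. Let $X_k:=|\{i:\{i\}\in\pi_k\}|$ denote the number of singletons in $\pi_k$, i.e.\ the number of external branches still alive when the coalescent has $k$ blocks. Because $T_n=0$, the waiting interval $[T_k,T_{k-1}]$ lies entirely below time $T_m$ for every $k>m$, and during this interval the $X_k$ alive singletons each grow by $T_{k-1}-T_k$; hence
\[
\hat L_n^{\alpha,1} \;=\; \sum_{k=m+1}^n X_k(T_{k-1}-T_k).
\]
Since the partition chain is independent of the coalescent times and $T_{k-1}-T_k$ is exponential with mean $b_k:=2/(k(k-1))$ and variance $b_k^2$, the moments of $\hat L_n^{\alpha,1}$ reduce to joint moments of the $X_k$.

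For the mean, a short telescoping argument (at level $j+1$ the singleton $\{1\}$ survives the merger to level $j$ with probability $(j-1)/(j+1)$) yields $\mathbf{P}(\{1\}\in\pi_k)=k(k-1)/(n(n-1))$, and by exchangeability $\mathbf{E}(X_k)=k(k-1)/(n-1)$. Since $\mathbf{E}(X_k)b_k=2/(n-1)$ is constant in $k$, summing over $k=m+1,\ldots,n$ gives $\mathbf{E}(\hat L_n^{\alpha,1})=2(n-m)/(n-1)$.

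For the variance I would use
\[
\mathbf{Var}(\hat L_n^{\alpha,1}) \;=\; \sum_{k=m+1}^n \mathbf{E}(X_k^2)b_k^2 \;+\; \sum_{j,k=m+1}^n b_jb_k\,\mathbf{Cov}(X_j,X_k).
\]
The same telescoping applied to two tagged individuals gives $\mathbf{P}(\{1\},\{2\}\in\pi_k)=k(k-1)^2(k-2)/(n(n-1)^2(n-2))$, and for $j\le k$ exchangeability of the blocks of $\pi_k$ yields
\[
\mathbf{P}(\{1\}\in\pi_j,\{2\}\in\pi_k)=\mathbf{P}(\{1\},\{2\}\in\pi_k)\cdot \frac{j(j-1)}{k(k-1)}.
\]
After simplification,
\[
\mathbf{Cov}(X_j,X_k) \;=\; \frac{j(j-1)(n-k)(n-k-1)}{(n-1)^2(n-2)} \qquad (j\le k).
\]
The crucial observation is that $b_jb_k\mathbf{Cov}(X_j,X_k)$ depends only on $\max(j,k)$, so grouping ordered pairs by their maximum collapses the double sum to a single sum.

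The main obstacle is the resulting algebraic simplification of
\[
\mathbf{Var}(\hat L_n^{\alpha,1})=\sum_{k=m+1}^n \frac{4\bigl[(n-k)(n-k-1)(2k-2m-1)+(n-1)(k^2-3k+n)\bigr]}{k(k-1)(n-1)^2(n-2)}.
\]
Expanding the bracket as a polynomial in $k$, the harmonic contribution $h_{n-1}-h_{m-1}$ arises via the identity $\frac{2k-1}{k(k-1)}=\frac{1}{k}+\frac{1}{k-1}$, while the remaining rational pieces telescope via $\frac{1}{k(k-1)}=\frac{1}{k-1}-\frac{1}{k}$. Careful bookkeeping of the coefficients then produces the stated combination $8(h_{n-1}-h_{m-1})(n+2m-2)/((n-1)(n-2))$ and $-4(n-m)(4n+m-5)/((n-1)^2(n-2))$.
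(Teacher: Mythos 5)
Your proposal is correct and follows essentially the same route as the paper: the representation $\hat L_n^{\alpha,1}=\sum_{k=m+1}^n X_k(T_{k-1}-T_k)$ is the paper's $\sum_{m<k\le n}(T_{k-1}-T_k)(k-V_k)$, your covariance formula for the number of singletons agrees with Proposition \ref{P6} under $V_k=U_{n-k}$, and your variance split (diagonal second-moment term plus the full covariance double sum, collapsed via the dependence on $\max(j,k)$ only) is algebraically identical to the paper's conditional-variance decomposition $\mathbf{Var}(\hat L-E_n)+\mathbf{Var}(E_n)$. Your combined summand matches the paper's intermediate expression $\frac{4(n-m)}{(n-1)^2}+8\sum_{m<k\le n}\frac{(k-m)(n-k)(n-k-1)}{k(k-1)(n-1)^2(n-2)}$ exactly, so the remaining bookkeeping you sketch is indeed just the routine partial-fraction/telescoping computation the paper carries out.
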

\noindent
For $\alpha =0$ we recover the formula of Fu and Li. A similar expression holds for $\hat L_n^{\alpha,\beta}$. 

Proposition \ref{P3} and Theorem \ref{T4} carry over to 
$\hat L_n^{\alpha,\beta}$, up to a change in expectation 
and with the limit
$\sqrt{n}\hat L_{n}^{0,\frac 12}\ \dto\ \int_2^\infty (x-2) \, \eta(dx)$. 
The following histogram
from a random sample of length 10000 shows that  already for $n=50$ the
distribution of $\hat L_n^{\frac 12,1}$ fits well to the normal distribution
when using the values for expectation and variance, given in Proposition
\ref{P5}.  
\begin{center}
\psfrag{n1}{$n=50$}
\includegraphics[height=3cm,width=11cm]{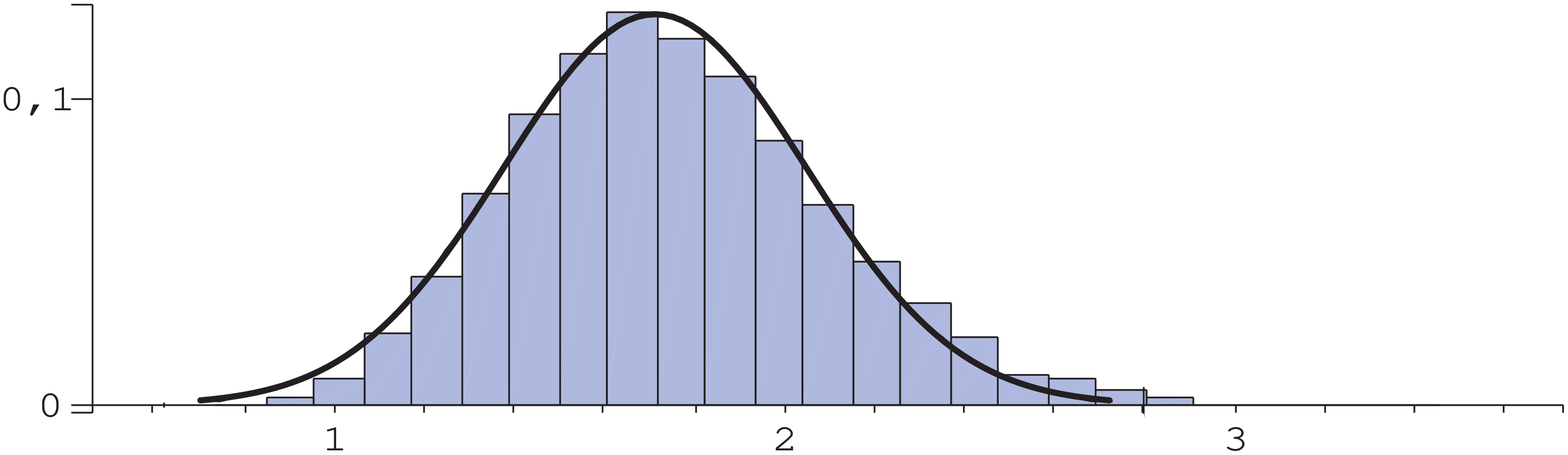}
\end{center}

Our main tool for the proofs is a representation of $L_n$ by means of an imbedded Markov chain $U_0,U_1,\ldots,U_n$, which is of interest of its own. We shall introduce it as an urn model. The relevant fact is that this model possesses an unexpected hidden symmetry, namely it is reversible in time.  This is our second main result. For the proof we use another urn model, which allows reversal of time in a simple manner.

The urn models are introduced and studied in Section \ref{S2}. Proposition
\ref{P3} is proven in Section \ref{S3}, Theorems \ref{T2} and \ref{T4} are
derived in Section \ref{S4} and Proposition \ref{P5} in Section \ref{S5}. In
Section \ref{S6} we complete the paper by considering the length of an
external branch chosen at random.

\section{The urn models}\label{S2}

Take an urn with $n$ {\em black} balls. Empty it in $n$ steps according to the rule: In each step remove a randomly chosen pair of balls and replace it by one {\em red} ball. In the last step remove the last remaining ball. Let 
\[ U_k := \text{ number of red balls in the urn after } k \text{ steps}\ . \]
Obviously $U_0=U_n=0$, $U_1=U_{n-1}=1$ and $1 \le U_k \le \min(k,n-k)$ for $2 \le k\le n-2$. $U_0, \ldots, U_n$ is a Markov chain with transition probabilities 
\[ \mathbf P( U_{k+1}=u' \mid U_k = u) = \begin{cases}\  {u \choose 2}\big/{n-k \choose 2} \ , & \text{if } u'=u-1 \ , \\ \ u(n-k-u)\big/{n-k \choose 2} \ , & \text{if } u'=u\ , \\ \ {n-k-u \choose 2}\big/{n-k \choose 2} \ , & \text{if } u'=u+1\ .
\end{cases} \]
We begin our study of the model by calculating expectations and covariances.
\begin{proposition}\label{P6}
For $0 \le k\le l \le n$
\[ \mathbf E(U_k) = \frac{k(n-k)}{n-1} \ , \quad \mathbf {Cov} (U_k,U_l) = \frac{k(k-1)(n-l)(n-l-1)}{(n-1)^2(n-2)} \ . \]  
\end{proposition}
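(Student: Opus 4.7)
The plan is to exploit the Markov structure just established. First I extract a one-step linear recursion for $\mathbf E(U_{k+1}\mid U_k)$ from the transition probabilities; then I bootstrap it into formulas for $\mathbf E(U_k)$, for $\mathbf E(U_l\mid U_k)$ with $k\le l$, and finally the covariance. The crucial simplification is that the conditional expectation turns out to be \emph{linear} in $U_k$, so the identity $\mathbf{Cov}(U_k,U_l)=\mathbf{Cov}(U_k,\mathbf E(U_l\mid U_k))$ reduces the whole covariance calculation to a multiple of $\mathbf{Var}(U_k)$ alone.

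Substituting the three transition probabilities and using the identity $(n-k-u)(n-k-u-1)-u(u-1)=(n-k-1)(n-k-2u)$ yields
\[
\mathbf E(U_{k+1}\mid U_k) = \frac{n-k-2}{n-k}\,U_k + 1\ ,
\]
from which $\mathbf E(U_k)=k(n-k)/(n-1)$ follows by a one-line induction on $k$ starting from $U_0=0$. Iterating the same recursion, the product telescopes:
\[
\mathbf E(U_l\mid U_k) = U_k\prod_{j=k}^{l-1}\frac{n-j-2}{n-j} + c_{k,l} = \frac{(n-l)(n-l-1)}{(n-k)(n-k-1)}\,U_k + c_{k,l}\ ,
\]
so
\[
\mathbf{Cov}(U_k,U_l)=\frac{(n-l)(n-l-1)}{(n-k)(n-k-1)}\,\mathbf{Var}(U_k)\ ,
\]
and the proposition is equivalent to verifying $\mathbf{Var}(U_k)=k(k-1)(n-k)(n-k-1)/((n-1)^2(n-2))$.

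The main obstacle is thus the variance. The cleanest route is to introduce $B_k:=n-k-U_k$, the number of original black balls still in the urn after $k$ steps. Since each step picks a uniformly random pair, per-step telescoping of the ``not-picked'' probabilities gives, for any choice of initial balls $i\ne j$,
\begin{align*}
\mathbf P(\text{ball }i\text{ still in urn after step }k) &= \frac{(n-k)(n-k-1)}{n(n-1)}\ , \\
\mathbf P(\text{balls }i,j\text{ both still in urn after step }k) &= \frac{(n-k)(n-k-1)^2(n-k-2)}{n(n-1)^2(n-2)}\ .
\end{align*}
Summing over the $n$ balls (and over the $n(n-1)$ ordered pairs) produces $\mathbf E(B_k)$ and $\mathbf E(B_k^2)$; after subtracting $\mathbf E(B_k)^2$ the resulting polynomial collapses via the algebraic identity $(n-1)(n-2)-(n-k-1)(n+k-2)=k(k-1)$, yielding the claimed variance of $U_k=(n-k)-B_k$. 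Spotting this collapse in advance is the only real conceptual work; all other steps are routine algebra.
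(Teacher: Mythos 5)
Your proof is correct, but it reaches the covariance by a genuinely different route than the paper. The paper writes $U_k=n-k-\sum_{i=1}^nZ_{ik}$ with survival indicators $Z_{ik}$ and computes \emph{both} the single-ball probability $\mathbf P(Z_{1k}=1)$ and the two-time joint probability $\mathbf P(Z_{1k}=1,Z_{2l}=1)$ for $k\le l$ as telescoping products, then extracts $\mathbf E(U_k)$ and $\mathbf{Cov}(U_k,U_l)$ from these in one uniform calculation. You instead derive the one-step conditional mean $\mathbf E(U_{k+1}\mid U_k)=\frac{n-k-2}{n-k}U_k+1$ (the same recursion the paper uses later in its Gaussian-process sketch), get $\mathbf E(U_k)$ by induction, and then exploit the affine form of $\mathbf E(U_l\mid U_k)$ together with the Markov property to reduce $\mathbf{Cov}(U_k,U_l)$ to $\frac{(n-l)(n-l-1)}{(n-k)(n-k-1)}\mathbf{Var}(U_k)$; only for the equal-time variance do you revert to the indicator/survival-probability computation, which at $k=l$ coincides with the paper's. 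All the individual computations check out: the identity $(n-k-u)(n-k-u-1)-u(u-1)=(n-k-1)(n-k-2u)$, the telescoped slope, the two survival probabilities, and the final collapse $(n-1)(n-2)-(n-k-1)(n+k-2)=k(k-1)$ are all correct, and the degenerate endpoints ($k\le1$, $l=n$) cause no trouble. What your route buys is that it avoids the cross-time joint survival probability entirely and makes structurally transparent why the covariance factors as a function of $k$ times a function of $l$; what the paper's route buys is uniformity --- a single family of indicator computations yields expectation, variance and covariance simultaneously, at the cost of one more telescoping product and some ``careful calculation'' at the end.
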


\begin{proof}
Imagine that the black balls are numbered from 1 to $n$. Let $Z_{ik}$ be the indicator variable of the event that the black ball with number $i$ is not yet removed after $k$ steps. Then $U_k = n-k-\sum_{i=1}^n Z_{ik}$ and consequently
\[ \mathbf E(U_k) = n-k - n \mathbf E(Z_{1k}) \]
and for $k \le l$ in view of $Z_{1l} \le Z_{1k}$
\begin{align*}
\mathbf {Cov} (U_k,U_l) &= \sum_{i=1}^n \sum_{j=1}^n \mathbf{Cov}(Z_{ik},Z_{jl})\\
&= n(n-1) \mathbf E(Z_{1k} Z_{2l}) + n \mathbf E(Z_{1l})-n^2 \mathbf E(Z_{1k})\mathbf E(Z_{1l}) \ .
\end{align*} 
Also
\[ \mathbf P( Z_{1k}=1) = \frac{{n-1 \choose 2}}{{n \choose 2}} \cdots \frac{{n-k \choose 2}}{{n-k+1 \choose 2}}  = \frac{(n-k)(n-k-1)}{n(n-1)} \]
and for  $k \le l$
\begin{align*}
\mathbf P(Z_{1k}=1,Z_{2l}=1) &= \frac{{n-2 \choose 2}}{{n \choose 2}} \cdots \frac{{n-k-1 \choose 2}}{{n-k+1 \choose 2}} \cdot \frac{{n-k-1 \choose 2}}{{n-k \choose 2}} \cdots \frac{{n-l \choose 2}}{{n-l+1 \choose 2}} \\ &= \frac{(n-k-1)(n-k-2)(n-l)(n-l-1)}{n(n-1)^2(n-2)} \ .
\end{align*}
Our claim now follows by careful calculation. 
\end{proof}
\noindent
Note that these expressions for expectations and covariances are invariant under the transformation $k \mapsto n-k$, $l \mapsto n-l$. This is not by coincidence:
\begin{theorem}\label{T7}
$(U_0,U_1,\ldots,U_n)$  and $(U_n, U_{n-1},\ldots, U_0)$ are equal in
  distribution.   
\end{theorem}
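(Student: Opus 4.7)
My plan is to prove Theorem~\ref{T7} by introducing an auxiliary urn model whose time-reversibility is manifest, and then showing that $(U_0,\ldots,U_n)$ is a measurable function of this richer process in a way that intertwines the two reversals. The paper's own preview of ``another urn model which allows reversal of time in a simple manner'' supports exactly this strategy.

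First I would enrich the state — for instance by tagging each red ball with its birth step and the set of original (black) balls it represents — so that the enriched chain records the full coalescent partition $\pi_{n-k}$, of which $U_k$ (the number of non-singleton blocks) is a deterministic function. Equivalently, a trajectory corresponds to a uniformly random labelled Kingman tree: a uniformly random binary tree on $n$ labelled leaves together with a uniform linear extension of its internal nodes by coalescence times $1,\ldots,n-1$.

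Next I would seek a combinatorial re-description of such trajectories — a decorated tree, a marked permutation, or a differently-parameterised urn — carrying a natural involution whose projection onto $U$ is precisely the time reversal $k\mapsto n-k$. Two things would then need to be checked: that projecting the auxiliary model's law onto colour counts recovers the urn transitions stated just before Proposition~\ref{P6}, and that the reversal involution of the auxiliary model acts on $U$ as the desired reversal. Theorem~\ref{T7} then follows by transporting the obvious symmetry of the auxiliary model through the projection.

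The main obstacle is finding this auxiliary urn. Writing $M_k(u,u') := \binom{n-k}{2}\,\mathbf P(U_{k+1}=u'\mid U_k=u)$, Theorem~\ref{T7} is equivalent to the path identity
\[
\prod_{k=0}^{n-2} M_k(u_k,u_{k+1}) \;=\; \prod_{k=0}^{n-2} M_k(u_{n-k},u_{n-k-1})
\]
for every admissible $(u_0,\ldots,u_n)$, since the denominators $\binom{n-k}{2}$ coincide on both sides and the deterministic last step contributes a trivial factor. A direct factor-by-factor comparison fails — individual terms differ — yet the two products agree after regrouping, as one may verify by hand in small cases. A conceptual proof must realise this cancellation as a structural property of some richer dynamics, and constructing that dynamics is precisely what the auxiliary urn is designed to do.
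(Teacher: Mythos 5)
There is a genuine gap: your proposal correctly identifies the paper's strategy in outline (pass to an auxiliary model with a manifest reversal symmetry and project down), and your reduction of the theorem to the path identity
\[
\prod_{k=0}^{n-2} M_k(u_k,u_{k+1}) \;=\; \prod_{k=0}^{n-2} M_k(u_{n-k},u_{n-k-1})
\]
is a correct restatement. But the entire mathematical content of the theorem lies in actually producing the auxiliary model, and you stop exactly at that point, acknowledging that ``constructing that dynamics is precisely what the auxiliary urn is designed to do.'' A restatement plus a description of the desired properties of an unconstructed object is not a proof. Moreover, the one concrete enrichment you do propose --- tagging red balls so as to recover the full partition $\pi_{n-k}$, i.e.\ lifting to the labelled Kingman tree --- points in the wrong direction: that enrichment carries strictly more information, but there is no evident involution on labelled coalescent trees that induces $k\mapsto n-k$ on the block counts, and the paper does not use it.

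What the paper actually does is the opposite of enriching toward the tree: it re-parameterises the process $U_k'=U_k-1$ by \emph{two independent uniform random permutations} $\nu$ and $\sigma$ of $\{1,\dots,n-1\}$ (concretely: two boxes $A$, $B$; alternately a random ball is shifted from $A$ to $B$ and a random black ball is recolored red; $\nu_m$ and $\sigma_m$ are the recoloring and shifting times of ball $m$). One checks by a short computation that the number of red balls in $A$ after the $k$-th shift, namely $\#\{m:\nu_m<k<\sigma_m\}$, is a Markov chain with exactly the transition probabilities $\binom{n-k-r-1}{2}/\binom{n-k}{2}$ and $\binom{r+1}{2}/\binom{n-k}{2}$ of $U_k-1$. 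The reversal symmetry is then immediate from the formula $U_k'=\#\{m:\nu_m<k<\sigma_m\}$, since replacing $(\nu,\sigma)$ by $(n-\sigma,n-\nu)$ preserves the joint law and sends $k$ to $n-k$. This explicit construction --- the identification of the hidden pair of independent permutations --- is the missing idea, and without it your argument does not establish the theorem.
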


\begin{proof}
Leaving aside $U_0=U_n=0$ we have  $U_k \ge 1$ $a.s.$ for the other values of $k$. Instead we shall  look at $U_k'= U_k-1$ for $1 \le k \le n-1$. It turns out that for this process one can specify a different dynamics, which is more lucid and amenable to reversing time.

Consider the following alternative box scheme: There are two boxes $A$ and $B$. At the beginning $A$ contains $n-1$ black balls whereas $B$ is empty. The balls are converted in $2n-2$ steps into $n-1$ red balls lying in $B$. Namely, in the steps number $1,3, \ldots, 2n-3$ a randomly drawn ball from $A$ is shifted to $B$ and in the steps number $2,4,\ldots, 2n-2$ a randomly chosen black ball (whether from $A$ or $B$) is recolored to a red ball. These $2n-2$ operations are carried out independently.

For $1 \le k \le n-1$ let
\begin{equation*}
U_k' := \text{number of red balls in box } A \text{ after } 2k-1 \text{ steps,}   
\end{equation*}
that is at the moment after the $k$th move and before the $k$th recoloring. Obviously the sequence is a Markov chain, also $U_1'=0$. 

As to the transition probabilities note that after $2k-1$ steps  there are   $n-k$ black balls in all and $n-k-1$ balls in $A$. Thus given $U_{k}'=r $ there are $r$ red and $n-k-r-1$ black balls in $A$, and the remaining $r+1$ black balls belong to $B$. Then $U_{k+1}'=r+1$ occurs only, if in the next step the ball recolored  from black to red belongs to $A$ and subsequently the ball shifted from $A$ to $B$ is black. Thus 
\begin{align*}
 \mathbf P(U_{k+1}'=r+1 \mid U_{k}'=r ) = \tfrac {n-k-r-1}{n-k} \cdot \tfrac{n-k-r-2}{n-k-1}= \tbinom {n-k-r-1}2/ \tbinom {n-k}2 \ .
\end{align*}
Similarly $U_{k+1}'=r-1$ occurs, if the recolored ball belongs to $B$ and next the ball shifted from $A$ to $B$ is red. The corresponding probability is
\begin{align*}
 \mathbf P(U_{k+1}'=r-1 \mid U_{k}'=r ) = \tfrac {r+1}{n-k} \cdot \tfrac{r}{n-k-1}= \tbinom {r+1}2/ \tbinom {n-k}2 \ .
\end{align*}
Since $U_1=1=U_1'+1$ and in view of the transition probabilities of $(U_k)$ and $(U_k')$ we see that $(U_1,\ldots, U_{n-1})$ and $(U_1'+1,\ldots,U_{n-1}'+1)$ indeed coincide in distribution.

Next note that $U_{n-1}'=0$. Therefore $U_k'$ can be considered as a function not only of the first $2k-1$  but also of the last $2n-2k-1$ shifting and recoloring steps. Since the steps are independent, the process backwards is equally easy to handle. Taking into account that backwards the order of moving and recoloring balls is interchanged, one may just repeat the calculations above to obtain reversibility. 

But this repetition can be avoided as well. Let us put our model more formally: Label the balls from $1$ to $n-1$ and write the state space as
\[ S:= \big\{ \bigl((L_1,c_1), \ldots, (L_{n-1},c_{n-1})\bigr) \mid  L_i \in \{ A,B\}, c_i \in \{b,r\} \big\} \ ,\]
where $L_i$ is the location of ball $i$ and $c_i$ its color. Then in our model the first and second coordinate are changed in turn from $A$ to $B$ and from $b$ to $r$. This is done completely at random, starting within the first coordinates. Clearly we may interchange the role of the first and second coordinate. Thus our box model is equivalent to the following version:

Again initially $A$ contains $n-1$ black balls whereas $B$ is empty. Now in the steps number $1,3, \ldots, 2n-3$ a randomly chosen black ball is recolored to a red ball and in the steps number $2,4,\ldots, 2n-2$ a randomly drawn ball from $A$ is shifted to $B$. Again these $2n-2$ operations are carried out independently. Here we consider
\[ U_k'' := \text{number of black balls in box } B \text{ after } 2k-1 \text{ steps.} \]
Then from the observed symmetry it is clear that $(U_1',\ldots,U_{n-1}')$ and $(U_1'',\ldots,U_{n-1}'')$ are equal in distribution.

If we finally interchange both colors and boxes as well, then we arrive at the dynamics of the backward process. This finishes the proof.
\end{proof}

\noindent
There is a variant of our proof, which  makes the reversibility of $(U_k')$
manifest in a different manner. Let again the balls be labelled from $1$ to
$n-1$. Denote
\begin{align*}
\rhox_m &:= \text{instance between } 1 \text{ and } n-1, \text{ when ball } m \text{ is colored to red},\\
\sigma_m &:= \text{instance between } 1 \text{ and } n-1, \text{ when ball } m \text{ is shifted to box }B.
\end{align*}
Then from our construction it is clear that $\rhox=(\rhox_m)$ and
$\sigma=(\sigma_m)$ are two independent random permutations of the 
numbers $\{1,\dots,n-1\}$. 
Moreover, at instance $k$ (i.e. after $2k-1$ steps) ball number
$m$ is red and belongs to box $A$, if it was colored before and shifted
afterwards, i.e. $\rhox_m < k < \sigma_m$. Thus we obtain the formula 
\begin{equation}\label{uk'}
 U_k' = \# \{ 1 \le m \le n-1 : \rhox_m < k < \sigma_m \}   
\end{equation}
and we may conclude the following result.

\begin{corollary}\label{C8}
Let $\rhox$ and $\sigma$ be two independent
  random permutations of  $\{1,\dots,n-1\}$. 
Then $(U_1, \ldots, U_{n-1})$ is
  equal in distribution to the process
\[ \bigl(  \# \{ 1 \le m \le n-1 : \rhox_m < k < \sigma_m \} +1\bigr)_{1 \le k \le n-1} \ .\]  
\end{corollary}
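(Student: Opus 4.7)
The plan is to read the corollary directly off the alternative box scheme used in the proof of Theorem \ref{T7}. That proof already established $(U_1, \ldots, U_{n-1}) \stackrel{d}{=} (U_1'+1, \ldots, U_{n-1}'+1)$, so it suffices to verify formula \eqref{uk'} and to check that $\rhox = (\rhox_m)$ and $\sigma = (\sigma_m)$ are independent uniform random permutations of $\{1, \ldots, n-1\}$.

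First I would address the permutation claim. In the scheme, the odd-numbered steps shift a uniformly chosen ball from $A$ to $B$, and the even-numbered steps recolor a uniformly chosen black ball; all $2n-2$ choices are carried out independently. Since shifted balls never return to $A$, the $j$-th shift picks uniformly among the balls not yet shifted, which is the standard sequential description of a uniform random permutation; hence $\sigma$ is uniform on the symmetric group of $\{1,\dots,n-1\}$. The same bookkeeping applied to the black balls (each recolored at most once) shows that $\rhox$ is uniform as well. Independence of the two permutations follows from the independence of the $2n-2$ random choices made during the run.

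Next I would establish \eqref{uk'} by a direct counting argument. After $2k-1$ steps exactly $k$ shifts and $k-1$ recolorings have been performed; hence ball $m$ lies in box $A$ iff it has not yet been shifted, i.e.\ $\sigma_m > k$, and is red iff it has been recolored, i.e.\ $\rhox_m \le k-1$, equivalently $\rhox_m < k$. The two conditions combine to $\rhox_m < k < \sigma_m$, yielding \eqref{uk'}. Since $U_k = U_k' + 1$ for $1 \le k \le n-1$, adding one completes the identification of the distributions.

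There is no substantive obstacle here; the only point calling for a line of care is transferring uniformity at each individual draw into uniformity of the full permutation, and noting that the independence of the two permutations is inherited from the independence baked into the scheme. Once these are recorded, the corollary is essentially a restatement of \eqref{uk'}.
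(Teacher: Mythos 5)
Your proposal is correct and takes essentially the same route as the paper: the corollary is read off the box scheme from the proof of Theorem~\ref{T7}, with $\rhox$ and $\sigma$ identified as independent uniform random permutations and formula \eqref{uk'} obtained by the same count of $k$ shifts and $k-1$ recolorings after $2k-1$ steps. Your explicit justification of uniformity and independence via the sequential construction merely spells out what the paper asserts is ``clear from our construction.''
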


Certainly this representation implies Theorem \ref{T7} again. Also it contains
additional information. For example, 
it is immediate that $U_k-1$ has a hypergeometric distribution with parameters $n-1, k-1,n-k-1$. 

The next example contains a first application of Theorem \ref{T7} to our original urn model.

\begin{example*}
Let us consider 
$\tau_n= \max\{k \ge 1: U_{n-k}=k\}$, 
the number of red balls in the urn, after the last black ball
has been removed. From reversibility $\tau_n$ has the same distribution as
the moment $\tau_n'=\max\{k\ge 1: U_{k}=k\}$, 
before the first red
ball is taken away from the urn. Thus 
\[ \mathbf P( \tau_n \ge k) = \frac{{n-2 \choose 2}}{{n-1\choose 2}} \frac{{n-4\choose 2}}{{n-2\choose 2}}\cdots \frac{{n-2k+2\choose 2}}{{n-k+1\choose 2}} = \frac{(n-k)\cdots (n-2k+1)}{(n-1)\cdots(n-k)}\ . \]
It follows for $t \ge 0$ 
\[ \mathbf P\Bigl(\frac{\tau_n}{\sqrt n} \ge t\Bigr) \to \exp(-t^2) \ , \]
as $n \to \infty$. \qed  
\end{example*}

More generally the dynamics of our urn looks as follows: Clearly, if $n$ is
large, then in the beginning always two black balls are removed from the
urn. The rare  
moments, when red balls are taken away, appear with increasing rate. Indeed it is not difficult to see that in the limit $n \to \infty$ and after a $\sqrt n$-scaling of time these instances build up a Poisson process with linearly increasing rate. As we have seen the picture remains the same after reversal of time. This will be made more precise in Section \ref{S4}.\\\\
We conclude this section by imbedding our urn model into the coalescent. Let
\begin{equation}\label{Vk}
   V_k :=k- \# \{ i  :\rho(i) < k \} \ , 
\end{equation}
and $U_{k}:=V_{n-k}$, $0 \le k \le n$. 
Thus $V_k$ is the number of internal branches among the $k$ branches after
the $(n-k)$-th coalescing event and $U_k$ is the number of internal branches among the $n-k$ branches after the $k$-th coalescing event. 
The coalescing mechanism takes two random branches and combines them into
one internal branch. If we code the external branches by black balls and the
internal branches by red, 
this completely conforms to our urn model;
thus $(U_0,\dots,U_n)$ is as above. By Theorem \ref{T7}, $(V_0,\dots,V_n)$ has the same distribution as
$(U_0,\dots,U_n)$. 
In the next sections we make use
of the Markov chain $V_0,\ldots,V_n$ and its properties. 

\begin{remark*}
  For a different interpretation of the process $(U_k)$, suppose that we
  have $n-1$ pairs of (different) shoes, and that all left shoes are mixed
  in one pile and all right shoes in another. We sort the shoes by taking
first a left shoe (at random), then a right shoe (also at random), then
another left shoe, and so on.
As soon as we take a shoe that matches one that we already have picked, we
put away the pair; otherwise we put the shoe on the table in front of us. 
If the pairs are numbered and $\rhox_m$ is the time right shoe 
$m$ is picked, and $\sigma_m$ the time left shoe $m$ is picked, 
then right shoe $m$ is on the table when 
the $k$-th left shoe has been picked if and only if $\rhox_m<k<\sigma_m$,
so by \eqref{uk'},  
the number of right shoes remaining on the table when the $k$-th left shoe
has been picked is $U_k'$, $1\le k\le n-1$.
The number of left shoes remaining on the table at the same time is
$U_k'+1=U_k$, so the total number of shoes on the table is $2U_k-1$.

This is a variation of the sock-sorting process studied in 
Steinsaltz (1999) and Janson (2009), Section 8,
which is similar except that there is
no difference between left and right; we obtain it if we mix all shoes in
one pile and pick from it at random.
(See Janson (2009) for other interpretations, including \emph{priority
  queues}, and further references.) 
It is not surprising that we have the same asymptotical behaviour of
$U_k$ and $\max_k U_k$ as for the sock-sorting problem. In particular,
we mention the following Gaussian process limit result, cf.\ Theorem 8.2
in Janson (2009). (This result is not used in the sequel.)
\end{remark*}

\begin{theorem}
As $n\to\infty$,
  the stochastic process
$n^{-1/2}\bigl(U_{\lfloor nt\rfloor}-nt(1-t)\bigr)$ converges in $D[0,1]$ to
  a continuous Gaussian process $Z(t)$ with mean $\mathbf E (Z(t))=0$ and
covariance function 
\begin{equation*}
  \mathbf E\bigl(Z(s)Z(t)\bigr) = s^2(1-t)^2, 
\qquad 0\le s\le t\le1.
\end{equation*}
\end{theorem}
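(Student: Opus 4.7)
The plan is to follow the template of Theorem 8.2 in Janson (2009) for the sock-sorting problem, exploiting the representation from Corollary \ref{C8}. Setting $\tau := \sigma \circ \rhox^{-1}$, which is itself a uniform random permutation of $\{1,\ldots,n-1\}$, one has for $1\le k\le n-1$
$$ U_k \;=\; 1 + \#\{1 \le i \le n-1 : i < k < \tau(i)\}, $$
so that $U_k-1$ counts the entries $(i,\tau(i))$ of the permutation matrix of $\tau$ lying strictly above and to the right of the diagonal point $(k,k)$. Thus the theorem becomes a functional convergence statement for a bounded statistic of a uniform random permutation. The centring is consistent with the true mean, since Proposition \ref{P6} gives $\mathbf E U_{\lfloor nt\rfloor} = nt(1-t)+O(1)$, and the $O(1)$ discrepancy is negligible after $n^{-1/2}$-scaling; the same proposition yields
$$ n^{-1}\mathbf{Cov}(U_{\lfloor ns\rfloor},U_{\lfloor nt\rfloor}) = \frac{\lfloor ns\rfloor(\lfloor ns\rfloor-1)(n-\lfloor nt\rfloor)(n-\lfloor nt\rfloor-1)}{n(n-1)^2(n-2)} \longrightarrow s^2(1-t)^2 $$
for $s\le t$, identifying the covariance of the limit.

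For finite-dimensional convergence I would invoke Cram\'er--Wold. A linear combination $\sum_p a_p U_{\lfloor nt_p\rfloor}$ equals, up to a deterministic constant, $\sum_{i=1}^{n-1} f(i,\tau(i))$, where $f(i,j):=\sum_p a_p\mathbf 1[i<\lfloor nt_p\rfloor<j]$ is bounded by $\sum_p |a_p|$. Hoeffding's combinatorial central limit theorem for sums of the shape $\sum_i f(i,\tau(i))$ over a uniform random permutation $\tau$ then delivers asymptotic normality, with the limit variance read off from the covariance calculation above.

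Finally, to promote this to convergence in $D[0,1]$ I would establish tightness through the fourth-moment increment bound
$$ \mathbf E\bigl[(X_n(t)-X_n(s))^2(X_n(u)-X_n(t))^2\bigr]\le C(u-s)^2,\qquad 0\le s\le t\le u\le 1, $$
where $X_n(t):=n^{-1/2}(U_{\lfloor nt\rfloor}-\mathbf E U_{\lfloor nt\rfloor})$. In the permutation representation each increment $U_\ell-U_k$ is, up to a deterministic term, the difference of two rectangle-counts of $\tau$'s matrix, each a bounded permutation statistic of variance $O(\ell-k)$, so the bound reduces to counting quadruples of indices --- essentially the bookkeeping that controls fourth moments of hypergeometric sums. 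Continuity of the limit is automatic because $|U_{k+1}-U_k|\le 1$ forces $X_n$ to have jumps of size only $n^{-1/2}$. The main technical obstacle is precisely this fourth-moment estimate with the right power of $(u-s)$; a cleaner alternative would be to first prove functional convergence of the two-parameter empirical-type process $(s,t)\mapsto n^{-1/2}\bigl(\#\{i\le ns:\tau(i)\le nt\}-nst\bigr)$ to a Brownian-bridge-type Gaussian field on $[0,1]^2$ and then deduce the claim by continuous mapping along the diagonal.
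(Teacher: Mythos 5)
Your route is genuinely different from the paper's. The paper never leaves the Markov--chain picture: it observes that $M_k=\bigl(U_k-\mathbf E(U_k)\bigr)/\bigl((n-k)(n-k-1)\bigr)$ is a martingale, computes its conditional quadratic variation, applies the martingale functional central limit theorem (Jacod and Shiryaev, Theorem VIII.3.11) to obtain convergence on $D[0,1-\delta]$, and then invokes the time-reversal symmetry of Theorem \ref{T7} to glue convergence on $D[0,1)$ and $D(0,1]$ into convergence on $D[0,1]$ --- the reversal being needed precisely because the martingale normalization $(n-k)(n-k-1)$ degenerates as $k\to n$. You instead work from the static permutation representation of Corollary \ref{C8}: the identification $U_k-1=\#\{i:i<k<\tau(i)\}$ with $\tau=\sigma\circ\rhox^{-1}$ uniform is correct, the covariance computation via Proposition \ref{P6} matches the claimed limit, and Cram\'er--Wold combined with Hoeffding's combinatorial CLT is a legitimate way to get the finite-dimensional distributions (the scores are bounded and the variance of a nondegenerate combination is of exact order $n$; degenerate combinations, e.g.\ involving $t=0$ or $t=1$, are handled by Chebyshev). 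A Billingsley-type moment criterion would then give tightness on all of $[0,1]$ in one stroke, with no need for time reversal. What the paper's route buys is that the quadratic-variation computation is short and mechanical; what yours buys is that it is elementary (no semimartingale limit theory) and exhibits the limit as a diagonal section of the permutation empirical field.

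The one place where your argument is not yet a proof is tightness: the bound $\mathbf E\bigl[(X_n(t)-X_n(s))^2(X_n(u)-X_n(t))^2\bigr]\le C(u-s)^2$ is asserted rather than derived, and it is the bulk of the work on this route. It does hold, and more easily than your phrasing suggests: for $k<\ell$ one has $U_\ell-U_k=N\bigl([k,\ell-1]\times[\ell+1,n-1]\bigr)-N\bigl([1,k-1]\times[k+1,\ell]\bigr)$, where $N(I\times J)=\#\{i\in I:\tau(i)\in J\}$ is hypergeometric with variance at most $\ell-k$ and centered fourth moment $O\bigl((\ell-k)^2\bigr)$ for $\ell-k\ge1$; the Cauchy--Schwarz inequality applied to the two squared increments then yields the required estimate, the case $u-s<1/n$ being trivial because one factor vanishes identically. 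You should carry out this computation (and record why replacing the centering $\mathbf E(U_{\lfloor nt\rfloor})=nt(1-t)+O(1)$ by $nt(1-t)$ is harmless after the $n^{-1/2}$ scaling) before the argument can stand on its own.
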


\begin{proof}[Sketch of proof]
  Note first that $\mathbf E (U_{\lfloor nt\rfloor}) = nt(1-t)+O(1)$ by 
Proposition \ref{P6}.

It is easily seen that
\begin{equation*}
  \mathbf E ( U_{k+1}\mid U_k)
= U_k - \frac2{n-k}U_k+1
=  \frac{n-k-2}{n-k}U_k+1
\end{equation*}
and it follows that 
\begin{equation*}
M_k:=
\frac{U_k-\mathbf E( U_k)}{(n-k)(n-k-1)}
=
\frac{U_k}{(n-k)(n-k-1)}  - \frac{k}{(n-1)(n-k-1)},  
\end{equation*}
$k=0,1,\dots,n-2$, is a martingale.

Consider in the sequel only $k\le(1-\delta)n$ for some fixed $\delta>0$.
Then $\mathbf{Var}(M_k)\le(n-k-1)^{-4} \mathbf{Var}(U_k) = O(n^{-3})$, and it
follows from Doob's inequality that 
\begin{equation*}
  \max_k |U_k - \mathbf E (U_k)| = O_P(n^{1/2}).
\end{equation*}
(Using Theorem \ref{T7} we see that this extends to $0\le k\le n$.)
A straightforward computation of the conditional quadratic variation 
$\langle M,M\rangle_m 
:= \sum_{k<m} \mathbf E\bigl( (M_{k+1}-M_k)^2\mid U_k)$
shows that, uniformly in $0\le t\le1-\delta$,
\begin{equation*}
n^3  \langle M,M\rangle_{\lfloor{nt}\rfloor} \pto \frac{t^2}{(1-t)^2},
\end{equation*}
which implies, 
see Theorem VIII.3.11 in Jacod and  Shiryaev (1987),
that $n^{3/2}M_{\lfloor{nt}\rfloor}\dto \hat Z(t)$ in $D[0,1-\delta]$, 
where $\hat Z(t)$ is a Gaussian martingale given by $\hat
Z(t) = W (t^2/(1-t)^2)$ for a standard Brownian motion $W(t)$.
The result follows, for $t\in [0,1-\delta]$, with $Z(t)=(1-t)^2 \hat Z(t)$.

Since $\delta>0$ is arbitrary, this yields convergence in $D[0,1)$. By
  time-reversal and Theorem \ref{T7}, we also have convergence in $D(0,1]$,
and together these imply convergence in $D[0,1]$, see e.g.\
the proof in Janson (2009).
\end{proof}

\section{Proof of Proposition \ref{P3}}\label{S3}

We use the representation
\[ L_n^{\alpha,\beta} = \sum_{n^\alpha \le k < n^\beta} T_k X_k \ , \]
where  
\[ X_k := \#\{ i :\rho(i)=k \} \ , \]
$1 \le k < n$. 
In view of the coalescing 
procedure $X_k$ takes only the values $0,1,2$,
and
from the definition \eqref{Vk} of $V_k$
\begin{equation}\label{Xk}
 X_k =1+V_{k}-V_{k+1}  \ .   
\end{equation}
From \eqref{Xk}, $V_k=U_{n-k}$ and Proposition \ref{P6} 
we obtain
after simple calculations 
\begin{align} \mathbf E(X_k) = \frac {2k}{n-1} \ , \quad \mathbf{Var}(X_k) =\frac{2k(n-k-1)(n-3)}{(n-1)^2(n-2)} 
\label{erwartung}
\end{align}
and for $k < l$
\begin{equation}\label{covX}
 \mathbf {Cov} (X_k,X_l) = -\frac{4k(n-l-1)}{(n-1)^2(n-2)} \ .  
\end{equation}
Also from $T_k = \sum_{j=k+1}^n (T_{j-1}-T_j)$ we have
$\mathbf E(T_k) = 2\sum_{j=k+1}^n \frac {1}{(j-1)j}$ and $\mathbf {Var}(T_k) = 4 \sum_{j=k+1}^n \frac {1}{(j-1)^2j^2}$; thus
\begin{align} \mathbf E(T_k) =2\Bigl( \frac 1k- \frac 1n\Bigr) \ , \quad \mathbf {Var}(T_k) \le \frac c{k^3}  \label{terwartung}
\end{align}
for a suitable $c>0$, independent of $n$. 

Thus from independence
\[ \mathbf E(L_n^{\alpha,\beta}) = \sum_{n^\alpha \le k < n^\beta} 2\Bigl(\frac 1k- \frac 1n\Bigr) \frac {2k}{n-1} \ . \]
Now the first claim follows by simple computation.

Further from independence
\begin{equation}\label{var1}
\mathbf {Var} \Bigl(\sum_{n^\alpha \le k < n^\beta} (T_k-\mathbf E(T_k))
X_k\Bigr) 
= \sum_{n^\alpha \le k,l < n^\beta} \mathbf {Cov}(T_k,T_l) \mathbf E(X_kX_l) 
\ .   
\end{equation}
Using \eqref{erwartung}--\eqref{terwartung} we have  for $k < l$, 
\begin{equation*}
\mathbf {Cov}(T_k,T_l)\mathbf E(X_kX_l) 
= \mathbf {Var}(T_l)\mathbf E(X_kX_l)
\le \mathbf {Var}(T_l)\mathbf E(X_k)\mathbf E(X_l)
\le  \frac{c}{l^3}\cdot\frac{4kl}{(n-1)^2},
\end{equation*}
and
it follows that
\begin{equation*}
  \begin{split}
0\le
\sum_{n^\alpha \le k<l < n^\beta} \mathbf {Cov}(T_k,T_l) \mathbf E(X_kX_l) 	
&\le
\sum_{n^\alpha \le k<l < n^\beta} \frac{4ck}{l^2}(n-1)^{-2}
\\
&\le
\sum_{n^\alpha \le k < n^\beta} 4c(n-1)^{-2}
   = O(n^{-1}) \ .
  \end{split}
\end{equation*}
Consequently, \eqref{var1} yields,
using again \eqref{erwartung}--\eqref{terwartung},
\begin{equation}\label{variance}  
  \begin{split}
\mathbf {Var} \Bigl(\sum_{n^\alpha \le k < n^\beta} &(T_k-\mathbf E(T_k))
X_k\Bigr) 
= 
\sum_{n^\alpha \le k < n^\beta}  \mathbf {Var}(T_k) \mathbf E(X_k^2)+O(n^{-1})
 \\&
\le c \sum_{n^\alpha \le k < n^\beta} \frac 1{k^3} \Bigl(\frac{2k}{n-1}+
\frac{4k^2}{(n-1)^2} \Bigr) 
+O(n^{-1}) 
\\&
\le \frac{6c}{n-1} \sum_{n^\alpha \le k < n^\beta} \frac 1{k^2}  
+O(n^{-1}) 
=
O(n^{-1}) \ .	
  \end{split}
\end{equation}

It remains to show that
\[\mathbf {Var} \Bigl(\sum_{n^\alpha \le k < n^\beta} \mathbf E(T_k) X_k\Bigr) \sim  8(\beta-\alpha) \frac{\log n}{n} \ . \]
Now 
\begin{align*} \Big|\sum_{n^\alpha \le k < l < n^\beta}&\mathbf E(T_k)\mathbf E(T_l) \mathbf {Cov}(X_k,X_l)\Big| \\
&\le \sum_{n^\alpha \le k < l < n^\beta} \frac{2}{k}\cdot \frac{2}{l}\cdot \frac{4k}{(n-1)^2} = 16 \sum_{n^\alpha < l < n^\beta}\frac{l-\lceil n^\alpha\rceil }{l(n-1)^2}  = O(n^{-1})
\end{align*}
and consequently 
\begin{align*}\mathbf {Var} &\Bigl(\sum_{n^\alpha \le k < n^\beta} \mathbf E(T_k) X_k\Bigr) \\&= \sum_{n^\alpha \le k < n^\beta} \mathbf E(T_k)^2 \mathbf {Var} (X_k) + O(n^{-1})\\
&= \sum_{n^\alpha \le k < n^\beta} \frac{4}{k^2} \cdot \frac{2k}{n} 
\Bigl(1+O\Bigl(\frac kn\Bigr)\Bigr)
+  O(n^{-1}) 
=  8(\beta-\alpha) \frac{\log n}{n} 
+  O(n^{-1}) \ . 
\end{align*}
This gives our claim.

\section{Proof of Theorems \ref{T2} and \ref{T4}}\label{S4}

In this section we use Theorem \ref{T7}. Namely, $V_0,\ldots, V_n$ is a Markov chain  with transition probabilities, which can be  expressed by means of $X_1,\ldots,X_{n-1}$ as follows:
\begin{align*}
\mathbf P(X_k=x\mid V_k=v)= \begin{cases} {n-k-v \choose 2}/ {n-k\choose 2} \ , &\text{if } x=0\ ,\\ v(n-k-v)/{n-k \choose 2} \ , &\text{if } x=1\ ,\\ {v\choose 2}/{n-k \choose 2} \ , &\text{if } x=2 \ .
\end{cases}
\end{align*}
We like to couple these random variables with suitable independent random variables taking values 0 or 1. Note that $V_k$ takes only values $v \le k$, thus for $k \le n/3$
\[ {n-k-v \choose 2}\Big/ {n-k\choose 2} \ge {n-2k \choose 2}\Big/ {n-k\choose 2} \ge \frac{n-3k }{n-k} \ . \]
Therefore we may enlarge our model by means of random variables $Y_k$, $k \le n/3$, such that
 \begin{align*}
\mathbf P(X_k=x,\ &Y_k=y\mid V_k=v, V_{k-1},\ldots,V_0,Y_{k-1},\ldots,Y_1)\\&= \begin{cases} \frac{n-3k}{n-k} \ , &\text{if } x=0,y=0\ , \\ {n-k-v \choose 2}/ {n-k\choose 2}- \frac{n-3k}{n-k}\ , &\text{if } x=0,y=1\ ,\\v(n-k-v)/{n-k \choose 2} \ , &\text{if } x=1,y=1\ ,\\ {v\choose 2}/{n-k \choose 2} \ , &\text{if } x=2,y=1 \ .
\end{cases}
\end{align*}
For $\mathbf P(X_k=x\mid V_k=v)$ this gives the above formula, whereas
\begin{align*} \mathbf P( Y_k=y \mid V_k = v, V_{k-1},\ldots,V_0,Y_{k-1},\ldots,Y_1)= \begin{cases}\  \frac{n-3k}{n-k} \ , &\text{if } y=0 \ , \\
\ \frac{2k}{n-k} \ , & \text{if } y=1\ .\\
\end{cases}
\end{align*}
This means that the  0/1-valued random variables $Y_k$, $k \le n/3$, are
independent. For convenience we put $Y_k=0$ for $k>n/3$. A straightforward
computation gives 
\begin{align}\label{ey-x}
\mathbf E(Y_{k}-X_{k}\mid  V_k = v)&= \frac{2(k-v)}{n-k} \ ,\\
\mathbf E((Y_{k}-X_{k})^2\mid  V_k=v) &= \frac{2(k-v)}{n-k}+
\frac{2v(v-1)}{(n-k)(n-k-1)} \notag \\ 
&\le \frac{2(k-v)}{n-k}+ \frac{2k(k-1)}{(n-k)(n-k-1)}
\end{align}
for $k\le n/3$. Since $k-\mathbf E(V_k)=k(k-1)/(n-1)$ from Proposition \ref{P6}, it follows
\begin{align}
\mathbf E((Y_k-X_k)^2) \le \frac{4k(k-1)}{(n-k)(n-k-1)} \ .  \label{pointpr}
\end{align}

\begin{proof}[Proof of Theorem \ref{T2}] 
Recall that, by \eqref{etan} and \eqref{Xk}, 
\begin{equation}\label{etan2}
\eta_n=\sum_{i=1}^n\delta_{\sqrt n T_{\rho(i)}}
=\sum_{k=1}^{n-1} X_k \delta_{\sqrt n T_{k}}.  
\end{equation}

Recall also that $\eta_n\dto\eta$ as point processes on the interval
$(0,\infty]$ 
means that $\int f\,d\eta_n\dto\int f\,d\eta$ for every continuous $f$ with
compact support in $(0,\infty]$, or equivalently $\eta_n(B)\dto\eta(B)$ for
every 
relatively compact Borel subset $B$ of $(0,\infty]$ such that $\eta(\partial
B)=0$ a.s. (Here $B$ is relatively compact, if $B \subseteq [\delta,\infty]$ 
for some $\delta > 0$.) 
See, for example, the Appendix in Janson and Spencer (2007) 
and Chapter 16 (in particular Theorem 16.16) in Kallenberg (2002).

Let us first look at the point process
\begin{equation}\label{eten}
 \eta_n' := \sum_{k=1}^{n-1} Y_k \delta_{2\sqrt n/k} \ .  
\end{equation}
For $0<a<b\le \infty$
\[ \eta_n'([a,b)) = \sum_{ \frac{2\sqrt n}b < k \le \frac{2\sqrt n}a} Y_k  \]
and 
\[ \mathbf E\bigl(\eta_n'([a,b))\bigr) = \sum_{ \frac{2\sqrt n}b < k \le \frac{2\sqrt n}a} \frac{2k}{n-k} \to 4(a^{-2}-b^{-2})= 8 \int_a^b \frac{dx}{x^3} \ , \]
thus we obtain from standard results on sums of independent 0/1-valued
random variables that $\eta_n'([a,b))$ has asymptotically a Poisson
  distribution. Also $\eta_n'(B_1), \ldots,\eta_n'(B_i)$ are independent for
  disjoint $B_1, \ldots,B_i$. Therefore we obtain from standard results on
  point processes 
(for example Kallenberg (2002), Proposition 16.17) 
weak convergence of $\eta_n'$ to the Poisson point process
  $\eta$ on $(0,\infty]$ with intensity $8x^{-3}\, dx$.    

Next we prove that for all $0< a< b \le\infty$
\[ \eta_n([a,b))-\eta_n'([a,b)) \to 0 \]
in probability. To this end note that from \eqref{pointpr}
\[ \mathbf E\Big[\sum_{ k \le \frac{2\sqrt n}a}  (Y_k-X_k)^2 \Big] = O(n^{-1/2}) \ , \] 
which implies that $\mathbf P(X_k=Y_k \text{ for all }k \le \frac{2\sqrt n}a)\to 1$.  Therefore we may well replace $Y_k$ by $X_k$ in $\eta_n'([a,b))$.

Also, by \eqref{terwartung},  
$\sqrt nT_k -2\sqrt{n}/k = \sqrt nT_k- \sqrt n\mathbf E(T_k)  -2/\sqrt{n}$.
From \eqref{terwartung} and Doob's inequality for any $ \varepsilon >0$
\[ \mathbf P\bigl( \max_{ k \ge n^{2/5}} \sqrt n| T_k - \mathbf E(T_k)| \ge \varepsilon\bigr) \le  \frac n{\varepsilon^2} \mathbf {Var} (T_{\lceil n^{2/5} \rceil}) =O(n^{-1/5})\ . \]
Since $\mathbf P(Y_k=0 \text{ for all } k< n^{2/5}) \to 1$, we may as well
also replace $ 2\sqrt{n}/k$ by $\sqrt nT_k$ in $\eta_n'$,
which yields  $\eta_n$ by \eqref{etan2} and \eqref{eten} 
(use for example Kallenberg (2002), Theorem 16.16).
Thus the proof of
Theorem \ref{T2} is complete.  
\end{proof}

\begin{proof}[Proof of Theorem \ref{T4}]
As to the first claim of Theorem \ref{T4} observe that the events
$\{L_n^{0,\beta}=0\}=\{X_k=0 \text{ for all } k < n^\beta\}$ and
$\{V_{\lceil n^\beta\rceil}=\lceil n^\beta\rceil\}$ are equal. Thus 
\begin{equation*}
  \begin{split}
\mathbf P(L_n^{\alpha,\beta}> 0) 
&\le \mathbf P(L_n^{0,\beta}> 0) 
= \mathbf P(\lceil n^\beta\rceil-V_{\lceil n^\beta\rceil}\ge 1) 
\\&
\le \mathbf E(\lceil n^\beta\rceil-V_{\lceil n^\beta\rceil}) 
= \frac{\lceil n^\beta\rceil(\lceil n^\beta\rceil-1)}{n-1} \ .	
  \end{split}
\end{equation*}
For $\beta < 1/2$ this quantity converges to zero, which gives the first claim of the theorem.

For the next claim we use that because of \eqref{terwartung} $\sqrt
nT_{\lceil n^{1/2}\rceil}$ has expectation  $2 + O(n^{-1/2})$ and variance
of order $n^{-1/2}$. Thus $\mathbf P(2-\varepsilon < \sqrt nT_{\lceil
  n^{1/2}\rceil} < 2 + \varepsilon)\to 1$ for all $\varepsilon > 0$. This
implies that the probability of the event 
\begin{align*} \int_{[2+\varepsilon, \infty)} &x\, \eta_n(dx)
= \sqrt	n\sum_{k=1}^n T_kX_k I_{\{\sqrt n T_k \ge 2+ \varepsilon\}}\\ 
&\le \sqrt n\sum_{k < \sqrt n} T_kX_k= \sqrt nL_n^{0,\frac 12} \\
&\le \sqrt n\sum_{k=1}^n T_kX_k I_{\{\sqrt n T_k \ge 2- \varepsilon\}}
=\int_{[2-\varepsilon, \infty)} x\, \eta_n(dx)
\end{align*}
goes to 1. Also for $a >0$ from Theorem \ref{T2} $\int_a^\infty x \,
\eta_n(dx) \to \int_a^\infty x \, \eta(dx)$ in distribution. Altogether we
obtain,
letting $\epsilon\to0$,
\[ \sqrt nL_n^{0,\frac 12} \to \int_2^\infty x \, \eta(dx) \ ,\]
which is  our second claim.

As to the last claim of Theorem \ref{T4} we note that from \eqref{variance}  
\begin{equation}\label{mag}
 L_n^{\alpha,\beta} 
= \sum_{n^\alpha \le k < n^\beta} \mathbf E(T_k)X_k +O_P(n^{-1/2})   
\end{equation}
in probability, and also in $L^1$. 
In this representation we like to replace $X_k$ by $Y_k$. 
We assume first $\beta<1$. 
Note that for $\beta < 1$ in view of \eqref{terwartung} and \eqref{pointpr}
\begin{align*}
\mathbf {Var} \Bigl( \sum_{n^\alpha \le k < n^\beta}&\mathbf E(T_k)(Y_k-X_k -\mathbf E(Y_k-X_k \mid V_k)) \Bigr)\\ &\le \sum_{n^\alpha \le k < n^\beta} \frac 4{k^2}\mathbf E((Y_k-X_k)^2) = O(n^{\beta-2}) 
\end{align*}
and from \eqref{ey-x}, \eqref{terwartung} and Proposition \ref{P6} 
\begin{align*}
\mathbf {Var} \Bigl( \sum_{n^\alpha \le k < n^\beta}&\mathbf E(T_k)\mathbf E(Y_k-X_k\mid V_k) \Bigr)= \mathbf {Var} \Bigl( \sum_{n^\alpha \le k < n^\beta}\mathbf E(T_k)\frac{2V_k}{n-k} \Bigr)\\
&\le 2 \sum_{n^\alpha\le k \le l < n^\beta} 
 4\frac{\mathbf E(T_k)\mathbf E(T_l)}{(n-k)(n-l)} \mathbf {Cov}(V_k,V_l) \\
&\le 32 \sum_{n^\alpha\le k \le l < n^\beta} \frac kl \cdot
 \frac{(n-l)}{(n-k)(n-1)^2(n-2)} = O(n^{2\beta-3})
\ .
\end{align*}
Thus
$ \sum_{n^\alpha \le k < n^\beta} \mathbf E(T_k)
 \bigl(Y_k-X_k)-\mathbf E (Y_k-X_k)\bigr) 
=O_P(n^{-1/2})$
and \eqref{mag} yields
\[
L_n^{\alpha,\beta} - \mathbf E(L_n^{\alpha,\beta} ) =  \sum_{n^{\alpha}\le k < n^\beta} \mathbf E(T_k) (Y_k -\mathbf E(Y_k)) + O_P(n^{-1/2})\ .  
\]
Also 
$\mathbf {Var} (\frac 1n\sum_{n^{\alpha}\le k < n^\beta} Y_k) \le n^{-2} \sum_{n^{\alpha}\le k < n^\beta} 2k/(n-k) = O(n^{-1})$, and because of \eqref{terwartung} we end up with
\begin{align}
 L_n^{\alpha,\beta} - \mathbf E(L_n^{\alpha,\beta} ) =  2\sum_{n^{\alpha}\le k < n^\beta} \frac{Y_k -\mathbf E(Y_k)}{k} + O_P(n^{-1/2})\ . 
\label{darstellung}
\end{align}
This is a representation of the external length by a sum of independent random variables. 

Now $\mathbf {Var}(Y_k) = \frac{2k}{n-k}-\frac{4k^2}{(n-k)^2}$, thus for $\beta < 1$
\begin{align*} \mathbf {Var} \Bigl(2\sum_{n^{\alpha}\le k < n^\beta} \frac{Y_k -\mathbf E(Y_k)}{k}\Bigr) &= 4\sum_{n^{\alpha}\le k < n^\beta} \Bigl(\frac{2}{k(n-k)} - \frac{4}{(n-k)^2}\Bigr)\\
&\sim  8(\beta-\alpha)\frac{\log n}{n} \ .
\end{align*}
Moreover for $\delta >0$ we have $\mathbf E(|Y_k -\mathbf E(Y_k)|^{2+\delta}) \le \frac{2k}{n-k} + (\frac{2k}{n-k} )^{2+\delta} \le \frac{4k}{n-k}$ for $k \le n/3$, thus
\[ \sum_{n^{\alpha}\le k < n^\beta} \frac{1}{k^{2+\delta}} \mathbf E(|Y_k -\mathbf E(Y_k)|^{2+\delta}) \le 4 \sum_{n^{\alpha}\le k < n^\beta} \frac{1}{k^{1+\delta}(n-k)} \le \frac{8}{\delta n} \frac{1}{(n^{\alpha}-1)^\delta} \ . \]
Thus for $\alpha \ge 1/2$ we get
\[ \sum_{n^{\alpha}\le k < n^\beta} \frac{1}{k^{2+\delta}} \mathbf E(|Y_k-\mathbf E(Y_k)|^{2+\delta})  = o\Bigl(\frac{(\log n)^{1+\delta/2}}{n^{1+\delta/2}}\Bigr) \ , \]
and we may use Lyapunov's criterion for the central limit theorem. 
Consequently, 
\eqref{darstellung} implies
\begin{equation*}
\frac{ L_n^{\alpha,\beta} - \mathbf E(L_n^{\alpha,\beta} )}
{\sqrt{8(\beta-\alpha)\log n/n}} 
\dto N(0,1).  
\end{equation*}
This finishes the proof in the case $\beta < 1$,
using Proposition \ref{P3}.  

The case $\beta=1$ then
follows from $L_n^{\alpha,1}= L_n^{\alpha,1-\varepsilon}+
L_n^{1-\varepsilon,1}$ using Proposition~\ref{P3}. 

The last claim on asymptotic independence follows from \eqref{darstellung},
too.   
\end{proof}

\section{Proof of Proposition \ref{P5}} \label{S5}
Let $0\le\alpha \le 1$ and $m=\lfloor n^\alpha \rfloor$. Since $k-V_k = \# \{ i  :\rho(i) < k \}$ is the number of external branches, which are found between level $k-1$ and $k$, 
\[ \hat L_n^{\alpha,1} =  \sum_{m < k \le n} (T_{k-1}-T_k)(k-V_k) \ . \]
From independence
\[ \mathbf E(\hat L_n^{\alpha,1}) = \sum_{m < k \le n} \frac{2}{k(k-1)}\cdot \frac{k(k-1)}{n-1} \ . \]
This gives the first claim. Next, letting
\[ E_n := \mathbf E(L^{\alpha,1}_n \mid V_0, \ldots, V_n) = \sum_{m < k \le n}  \frac{k-V_k}{{k \choose 2}} \ ,\]
we have
\[\mathbf {Var}(L^{\alpha,1}_n )= \mathbf {Var}(L^{\alpha,1}_n -E_n)+ \mathbf {Var}(E_n) \ . \]
Now, using Proposition~\ref{P6}, 
\begin{align*}
\mathbf {Var}(&L^{\alpha,1}_n -E_n) =  \sum_{m < k \le n}\mathbf E\Bigl( \Bigl(T_{k-1}-T_k -\frac 1{{k\choose 2}}\Bigr)^2\Bigr)\mathbf E((k-V_k)^2)\\
&=  \sum_{m < k \le n} \frac 1{{k\choose 2}^2} \Bigl( \frac{k^2(k-1)^2}{(n-1)^2} + \frac{k(k-1)(n-k)(n-k-1)}{(n-1)^2(n-2)} \Bigr)\\
&=4\frac{n-m}{(n-1)^2} + 4 \sum_{m < k \le n} \frac{(n-k)(n-k-1)}{k(k-1)(n-1)^2(n-2)} 
\end{align*}
and
\begin{align*}&\mathbf {Var}(E_n) =   \sum_{m < k,l \le n} \frac{1}{{k \choose 2}{l \choose 2}} \mathbf {Cov}(V_k,V_l) \\
&\ = 4\sum_{m < k \le n} \frac{(n-k)(n-k-1)}{k(k-1)(n-1)^2(n-2)}+ 8\sum_{m< k < l \le n} \frac{(n-l)(n-l-1)}{l(l-1)(n-1)^2(n-2)}  \\
&\ = 4\sum_{m < k \le n} \frac{(n-k)(n-k-1)}{k(k-1)(n-1)^2(n-2)}+8 \sum_{m < l \le n} \frac{(l-m-1)(n-l)(n-l-1)}{l(l-1)(n-1)^2(n-2)} \ .
\end{align*}
Thus
\begin{align*}
\mathbf {Var}(L^{\alpha,1}_n )= 4\frac{n-m}{(n-1)^2}  +8 \sum_{m < k \le n}
\frac{(k-m)(n-k)(n-k-1)}{k(k-1)(n-1)^2(n-2)}
\ .
\end{align*}
Now
\begin{align*}
(k-m)&(n-k)(n-k-1)\\&=\bigl(k-1-(m-1)\bigr)\bigl(k(k-1)-2(n-1)k+n(n-1)\bigr)\\
&= k(k-1)^2 -(2n+m-3)k(k-1)\\& \qquad \mbox{}+(n+2m-2)(n-1)k-mn(n-1),
\end{align*}
thus
\begin{align*}
\tfrac 12 &(n-m)(n-2)+\sum_{m < k \le n} \frac{(k-m)(n-k)(n-k-1)}{k(k-1)} \\
&= \tfrac 12 (n-m)(n-2)+\tfrac 12(n-m)(n+m-1) - (n-m)(2n+m-3)\\
&\qquad \mbox{}+ (h_{n-1}-h_{m-1})(n+2m-2)(n-1)- \Bigl(\frac 1m - \frac 1n\Bigr)mn(n-1)\\
&= (h_{n-1}-h_{m-1})(n+2m-2)(n-1)-\tfrac 12 (n-m)(4n+m-5)
\ .
\end{align*}
Combining our formulas the result follows. \qed

\section{The length of a random external branch}\label{S6}

Finally we look at the distribution of the length of an external branch chosen at random. Equivalently, letting $\rho:=\rho(1)$, we may consider 
\[ R_n := T_{\rho} \ , \]
the  length of the branch ending in the leaf with label 1. Its asymptotic
distribution can be obtained in an elementary manner and without recourse to
the results of the preceding sections. Recall 
$ \rho := \max \{k\ge 1 : \{1\} \notin \pi_{k} \}$, 
thus 
\[ \mathbf P(\rho < k ) = \frac{{n-1\choose 2}}{{n \choose 2}} \cdots \frac {{k\choose 2}}{{k+1 \choose 2}} = \frac{k(k-1)}{n(n-1)} \ . \]
Letting
\[ R_n' 
:= \sum_{k=\rho+1}^{n} \frac 1{{k\choose 2}} 
= 2\Bigl(\frac 1\rho - \frac 1n\Bigr) \ , \]
$\{ R_n' > r\} = \{ \rho < 2n/(nr+2)\}$ and for $x>0$ 
\[ \mathbf P( nR_n' > x ) = \mathbf P(\rho < 2n/(x+2)) \sim \frac 4{(x+2)^2} \ . \]
We show that this limiting result carries over to $R_n$. From
\[ R_n-R_n' = \sum_{k=\rho+1}^n \Bigl( T_{k-1}-T_k - \frac 1{{k\choose 2}} \Bigr)= \sum_{k=2}^n \Bigl( T_{k-1}-T_k - \frac 1{{k\choose 2}} \Bigr)I_{\{\rho < k\}} \]
it follows that
\[ \mathbf P\Bigl(R_n-R_n' \neq \sum_{\sqrt n < k \le n} \Bigl( T_{k-1}-T_k - \frac 1{{k\choose 2}} \Bigr)I_{\{\rho < k\}} \Bigr) \le \mathbf P(\rho < \sqrt n) = o(1) . \]
Also from independence
\begin{align*}
\mathbf E\Big[ \Bigl(\sum_{\sqrt n < k \le n} &\Bigl( T_{k-1}-T_k - \frac 1{{k\choose 2}} \Bigr)I_{\{\rho < k\}} \Bigr)^2\Big] \\ &= \sum_{\sqrt n < k \le n} \mathbf E\Big[ \Bigl( T_{k-1}-T_k - \frac 1{{k\choose 2}} \Bigr)^2\Big]\mathbf P(\rho < k) \\
&= \sum_{\sqrt n < k \le n} \frac 1{{k \choose 2}^2} \frac {{k\choose 2}}{{n\choose 2}} = o(n^{-2}) \ . 
\end{align*}
Consequently $R_n= R_n' + o(n^{-1})$ in probability. Thus we end up with the
following result, which was obtained by Caliebe et al (2007) by means of
Laplace transform methods. 

\begin{proposition}\label{P9}
$nR_n$ converges in distribution to the law $\mu$ on $\mathbb R^+$ with density
 $\mu(dx)=8(x+2)^{-3} \, dx$.   
\end{proposition}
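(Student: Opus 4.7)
The plan is to exploit the independence of the partition process $(\pi_n,\ldots,\pi_1)$ and the time process $(T_n,\ldots,T_1)$. Conditional on $\rho$, the length $R_n=T_\rho$ is a sum of independent exponentials with parameters $\binom{j}{2}$, $j>\rho$, so its conditional mean is exactly $2(1/\rho-1/n)$. I therefore introduce the auxiliary random variable $R_n':=2(1/\rho-1/n)$ and aim to show two things: (i) $nR_n'$ converges in distribution to $\mu$; (ii) $n(R_n-R_n')\to 0$ in probability. Combining these gives the proposition by Slutsky.

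For step (i), I would first compute the distribution of $\rho$. Since the merging step from $\pi_{k+1}$ to $\pi_k$ picks two of the $k+1$ blocks uniformly at random, the singleton $\{1\}$ survives in $\pi_k$ with probability $\binom{k}{2}/\binom{k+1}{2}$ given it survived in $\pi_{k+1}$. Telescoping yields $\mathbf{P}(\rho<k)=k(k-1)/(n(n-1))$. Then $\{R_n'>x\}=\{\rho<2n/(nx+2)\}$, so for fixed $x>0$,
\[
\mathbf{P}(nR_n'>x)=\mathbf{P}\bigl(\rho<2n/(x+2)\bigr)\ \longrightarrow\ \frac{4}{(x+2)^2},
\]
which is precisely the tail of $\mu$.

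For step (ii), note that, conditional on $\rho=k$, the difference $R_n-R_n'=\sum_{j=k+1}^n((T_{j-1}-T_j)-1/\binom{j}{2})$ is a sum of centered independent random variables with variances $1/\binom{j}{2}^2$. The main concern is that when $\rho$ is small the tail sum accumulates; however, since $\rho\ge c\sqrt n$ with high probability (by step (i) the distribution of $\rho/\sqrt n$ is tight and has no mass at $0$ in the limit), I can restrict attention to the event $\{\rho>\sqrt n\}$. On this event, using independence of $\rho$ and $(T_j)$,
\[
\mathbf{E}\Bigl[\bigl(R_n-R_n'\bigr)^2 \mathbf{1}_{\{\rho>\sqrt n\}}\Bigr]
\le \sum_{k>\sqrt n}\frac{1}{\binom{k}{2}^2}\mathbf{P}(\rho<k)
= o(n^{-2}),
\]
which via Chebyshev gives $n(R_n-R_n')\pto 0$.

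The main (mild) obstacle is bookkeeping the event $\{\rho\le\sqrt n\}$: there $R_n-R_n'$ need not be small, but this event has vanishing probability, so it can simply be discarded. Once the truncation is in place, the computation is a routine variance estimate and the proof is complete.
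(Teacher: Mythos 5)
Your proposal follows essentially the same route as the paper's own proof: the same auxiliary variable $R_n'=2(1/\rho-1/n)$ (the conditional mean of $T_\rho$ given $\rho$), the same computation $\mathbf P(\rho<k)=k(k-1)/(n(n-1))$ leading to the limiting tail $4/(x+2)^2$, and the same truncation on $\{\rho>\sqrt n\}$ followed by the variance bound $\sum_{k>\sqrt n}\binom{k}{2}^{-2}\mathbf P(\rho<k)=o(n^{-2})$. The argument is correct as planned; no gaps.
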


\end{document}